\documentclass[english]{amsart}
\usepackage[utf8]{inputenc}
\usepackage[english]{babel}
\usepackage{amsthm,amsmath,amssymb,amsfonts,amscd,epsfig}
\usepackage{graphicx,enumerate,csquotes}
\usepackage[all]{xy}
\usepackage{booktabs,delarray,mathtools,float,faktor,xfrac}
\usepackage{textgreek,verbatim,subfigure,tabularx,longtable,multirow,spalign}
\usepackage{footnote}\makesavenoteenv{tabular}\makesavenoteenv{table}
\usepackage{pgfplots}
\usepackage{tikz}
\usetikzlibrary{matrix}
\usetikzlibrary{cd,positioning,arrows}
\usepackage{mathrsfs}

\raggedbottom
\vfuzz2pt 
\hfuzz2pt 

\newtheorem{thm}{Theorem}[section]

\newtheorem{lem}[thm]{Lemma}
\newtheorem{prop}[thm]{Proposition}
\theoremstyle{definition}
\newtheorem{defn}[thm]{Definition}
\theoremstyle{remark}
\newtheorem{rem}[thm]{Remark}
\numberwithin{equation}{section}
\newtheorem{examp}[thm]{Example}

\renewcommand{\epsilon}{\varepsilon}

\newcommand{\cC}{\mathcal C}

\newcommand{\bbR}{\mathbb R}
\newcommand{\bbT}{\mathbb T}

\newcommand{\bbZ}{\mathbb Z}

\newcommand{\mfg}{\mathfrak g}
\newcommand{\mfh}{\mathfrak h}

\newcommand{\bd}{\partial}

\newcommand{\frbd}[1]{\frac{\bd}{\bd #1}}

\DeclareMathOperator{\rank}{rank}
\parskip=4pt

\pgfplotsset{compat=1.14}

\begin{document}

\title[Rigidity of cotangent lifts and integrable systems]{Rigidity of cotangent lifts and integrable systems} 

\author{Pau Mir}
\address{ Pau Mir,
Laboratory of Geometry and Dynamical Systems, Department of Mathematics, EPSEB, Universitat Polit\`{e}cnica de Catalunya BGSMath Barcelona Graduate School of
Mathematics in Barcelona}
\email{pau.mir.garcia@upc.edu}
\author{Eva Miranda}\address{ Eva Miranda,
Laboratory of Geometry and Dynamical Systems, Department of Mathematics, EPSEB, Universitat Polit\`{e}cnica de Catalunya BGSMath Barcelona Graduate School of
Mathematics in Barcelona and
\\ IMCCE, CNRS-UMR8028, Observatoire de Paris, PSL University, Sorbonne
Universit\'{e}, 77 Avenue Denfert-Rochereau,
75014 Paris, France
 }\thanks{  {Eva} Miranda  is supported by the Catalan Institution for Research and Advanced Studies via an ICREA Academia Prize 2016 and partially supported  by the grant reference number MTM2015-69135-P (MINECO/FEDER). Pau Mir and Eva Miranda  are both  partially supported   by the grant reference number {2017SGR932} (AGAUR). Pau Mir  is supported by an INIREC grant (beca d'iniciació a la recerca) supported by Eva Miranda's ICREA Academia grant. }
 \email{ eva.miranda@upc.edu}
\begin{abstract}
    In this article we generalize a theorem by Palais on the rigidity of compact group actions to cotangent lifts. We use this result to prove rigidity results for integrable systems on symplectic manifolds including sytems with degenerate singularities which are invariant under a torus action.\end{abstract}

\maketitle

\section{Introduction}

In \cite{PalaisR61} Palais proved that two close compact Lie group actions on a compact manifold are equivalent in the sense that there exists a diffeomorphism conjugating both actions.
This rigidity result comes hand-in-hand with other classical stability results à la Mather-Thom for differentiable maps in the 60's and 70's for which stability yields equivalence of close maps (see for instance \cite{mather} and \cite{thom}). 

Symplectic manifolds provide a natural landscape to test stability ideas as among the classical actions of Lie groups on symplectic manifolds the ones admitting a \emph{moment map} stand out. These are Hamiltonian actions where the group action can be read off from a mapping   $\mu:M\longrightarrow \mathfrak{g}^*$ where $ \mathfrak{g}$ is the Lie algebra of the Lie group.

In \cite{MirandaRSME} it was proved that $C^2$-close symplectic actions on a compact symplectic manifold are equivalent in the sense that not only the actions are conjugated by a diffeomorphism but this diffeomorphism preserves the symplectic form. The proof in \cite{MirandaRSME} (see also \cite{MirandaMonnierZung}) uses the path method requiring differentiability of degree 2  as the diffeomorphism yielding the equivalence comes from integration of a time-dependent vector field. Generalizations of this result can be easily achieved for Hamiltonian actions in the symplectic context. In the more general Poisson context technical complications occur due to the lack of a general path method in Poisson geometry and fine Nash-Moser techniques come to the rescue to prove rigidity of Hamiltonian actions of semisimple Lie groups of compact type on Poisson manifolds as proven in  \cite{MirandaMonnierZung}.
Those results can be obtained either globally (for compact manifolds) or semilocally (in the neighbourhood of a compact submanifold which is invariant  by the group actions).

The lift of Lie group actions to the cotangent bundle,  naturally equipped with a canonical symplectic form, provide natural examples of Hamiltonian actions. Non-compactness of cotangent bundles leaves the study of equivalence of actions out of the radar of the case needs to be re-examined with fresh eyes.
In this note we analyze the case of cotangent lifted actions  where we can easily prove the equivalence of Hamiltonian actions on non-compact manifolds (cotangent bundles) by lifting the diffeomorphism of Palais from the base. This simple idea allows to reduce the required degree of differentiability by 1 from the case of compact group actions on compact symplectic manifolds.
We present a new result on rigidity of lifted actions, which can be thought as an extension of Palais rigidity Theorem to the cotangent lift of an action of a compact group. It has the advantage of being useful at the level of the cotangent bundle, which is a non-compact manifold, in contrast with the compactness required for the manifold in the original Palais Theorem.

Cotangent lifted actions may, a priori, seem a small class of actions to consider, however this class includes the wide class of regular integrable systems as we proved in \cite{KiesenhoferMiranda}.  The action-angle coordinate theorem for integrable systems can be rephrased (see \cite{KiesenhoferMiranda}) as follows: any integrable system is equivalent in a neighbourhood of a Liouville torus to the integrable system given by the cotangent lift of translations of this torus to $T^*(\mathbb T^n)$.

In this sense group actions turn out to be a useful tools to understand integrable systems.
But what happens with singularities of integrable systems? As a consequence of the cotangent lift result above and  rigidity theorem for cotangent lifts, it follows that integrable systems whose singularities are only of regular and of elliptic type are rigid inside the integrable class.
Some of these results can be reproved using normal form theorems for the integrable system and the symplectic forms. However, our technique reveals to be useful also when there are no normal forms known for degenerate singularities invariant by circle actions. We end up this article by proving a rigidity result for a class of degenerate singularities of  integrable systems.

\textbf{Organization of this article:} In Section \ref{sec:PrelimsRigidity} we give the definitions of closeness and rigidity and we present some known results on rigidity. We recall the definition of the cotangent lift of a Lie group action and  we briefly describe the main results on semilocal classification of non-degenerate singularities of integrable Hamiltonian systems. In Section \ref{sec:RigidityCotLift} we state and prove Theorem \ref{thm:palaiscotangentlift}, a result on rigidity of the cotangent lift of a close action of a compact Lie group on a compact manifold. Finally, in Sections \ref{sec:ApplIntSys} and \ref{sec:ApplDegSing} we give applications of theorem \ref{thm:palaiscotangentlift} to integrable Hamiltonian systems which have non-degenerate singularities (see Theorem \ref{thm:rigiditysystemwithdegensing}) for which stability and infinitesimal stability à la Mather hold (see remark \ref{rem:mather}) and a class of degenerate singularities for which no normal form theorem is known. 

\section{Preliminaries }
\label{sec:PrelimsRigidity}
In this article manifolds and maps are assumed to be $\mathcal{C}^\infty$ unless otherwise stated. The notation  follows \cite{GuillGinzKarsh} and \cite{MirandaCentrEur}.
\subsection{Rigidity theorems}
 Following Palais in \cite{PalaisR60}, we recall the definition of $C^k$-close actions.

\begin{defn}
Let $f,g:M\longrightarrow N$ be two smooth maps between smooth manifolds of dimension $m$ and $n$, respectively. Suppose that $(x_1,\dots,x_m)$ is a coordinate system for $K\subset M$ compact and $(y_1,\dots,y_n)$ is a coordinate system for $V\subset N$. Suppose that $f(K)\subset V$ and $g(K)\subset V$. Then, $f$ and $g$ are \textit{$C^k$-close maps}, for $k\geq0$, if there exists an $\epsilon>0$ such that $|y_i\circ f(p)-y_i\circ g(p)|<\epsilon$ for $p\in K$ and $i=1,\dots,n$ and $$\Big|\frac{\bd^r(y_i\circ f)}{\bd x_{j1}\cdots\bd x_{jr}}(p) - \frac{\bd^r(y_i\circ f)}{\bd x_{j1}\cdots\bd x_{jr}}(p)\Big| <\epsilon,$$ for $p\in K$, $r\leq k$, $i=1,\dots,n$ and $j_{\alpha}=1,\dots,m$.
\label{def:closemap}
\end{defn}

For Lie group actions the definition of closeness is the natural one, considering that the source space is the product of two smooth manifolds, hence a smooth manifold. We recall now the definition of rigidity of a group action.

\begin{defn}[Rigid action]
Let a Lie group $G$ act smoothly on a manifold $M$ and let $\rho:G \times M\longrightarrow M$ denote this action. The action $\rho$ is \textit{rigid} if for every smooth one-parameter family of actions $\rho_t$ of $G$ on $M$ there exists a one-parameter family of diffeomorphisms $h_t:M\longrightarrow M$ which conjugate $\rho$ to $\rho_t$ for all $t$ in a small interval $(-\epsilon,\epsilon)\subset \mathbb R$.
\label{def:rigidaction}
\end{defn}

Richard Palais already proved in \cite{PalaisR61} an important rigidity result, the existence of a diffeomorphism that conjugates $C^1$-close actions of a compact Lie group on a compact manifold.

\begin{thm}[Palais]
Let $G$ be a compact Lie group and $M$ a compact manifold. Let $\rho_1,\rho_2: G \times M \longrightarrow M$ be two actions which are $C^1$-close. Then, there exists a diffeomorphism $\varphi$ of class $C^1$ that conjugates $\rho_1$ and $\rho_2$, making them equivalent. This diffeomorphism belongs to the arc-connected component of the identity.
\label{th:palais}
\end{thm}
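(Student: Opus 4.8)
The plan is to reduce the nonlinear rigidity problem to an essentially linear one, by combining an equivariant embedding with an averaging argument over the compact group. First I would fix, via the Mostow--Palais equivariant embedding theorem, a $\rho_1$-equivariant embedding $\iota\colon M\hookrightarrow V$ of $M$ into a finite-dimensional orthogonal representation $(V,\pi)$ of $G$, so that $\iota\circ\rho_1(g,\cdot)=\pi(g)\circ\iota$ for all $g\in G$. The virtue of moving into the linear space $V$ is that the two submanifolds produced below will both be genuinely $\pi(G)$-invariant, and close invariant submanifolds of a linear representation can be matched equivariantly for soft reasons.

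The second step is the averaging. Using the normalized Haar measure on $G$, define $F\colon M\to V$ by
\[
F(x)=\int_G \pi(g)\,\iota\bigl(\rho_2(g^{-1},x)\bigr)\,dg .
\]
A change of variables $g\mapsto hg$ gives $F\circ\rho_2(h,\cdot)=\pi(h)\circ F$, so $F$ is $\rho_2$-equivariant and in particular $F(M)$ is $\pi(G)$-invariant. When $\rho_2=\rho_1$ the integrand collapses to $\pi(g)\pi(g^{-1})\iota(x)=\iota(x)$, hence $F=\iota$ in that case; so if $\rho_2$ is $C^1$-close to $\rho_1$ (uniformly in $g\in G$, which is automatic since $G$ is compact), then differentiating under the integral sign shows that $F$ is $C^1$-close to $\iota$. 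Since $M$ is compact, embeddings form a $C^1$-open subset of $C^1(M,V)$, so $F$ is again an embedding. At this point we have two compact $\pi(G)$-invariant submanifolds $\iota(M)$ and $F(M)$ of $V$, $C^1$-close to each other, with $\iota$ being $\rho_1$-equivariant and $F$ being $\rho_2$-equivariant.

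The third step moves $F(M)$ back onto $\iota(M)$ equivariantly. As $\iota(M)$ is a compact $\pi(G)$-invariant submanifold of the orthogonal representation $V$, the equivariant tubular neighbourhood theorem provides a $G$-invariant open neighbourhood $U\supset\iota(M)$ together with a $G$-equivariant retraction $r\colon U\to\iota(M)$. Since $F$ is $C^0$-close to $\iota$ we have $F(M)\subset U$, so we may set
\[
\psi:=\iota^{-1}\circ r\circ F\colon M\longrightarrow M .
\]
Because $r$ restricts to the identity on $\iota(M)$ and $F$ is $C^1$-close to $\iota$, the map $\psi$ is $C^1$-close to $\mathrm{id}_M$, hence a diffeomorphism; and because $r$ and $\iota^{-1}$ intertwine the $\pi(G)$-action with $\rho_1$ while $F$ intertwines it with $\rho_2$, a direct computation gives $\psi\circ\rho_2(g,\cdot)=\rho_1(g,\cdot)\circ\psi$. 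Therefore $\varphi:=\psi^{-1}$ conjugates $\rho_1$ to $\rho_2$. Finally, since $\varphi$ is $C^1$-close to the identity on the compact manifold $M$, it is isotopic to $\mathrm{id}_M$ (write $\varphi(x)=\exp_x X(x)$ for a small vector field $X$ with respect to an auxiliary metric and take $\varphi_s(x)=\exp_x(sX(x))$), so it lies in the arc-connected component of the identity.

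The delicate points I expect are, first, the quantitative $C^1$-control of the averaged map: one must differentiate under the integral and use uniform continuity of $D\rho_2$ over the compact set $G\times M$ to turn $C^1$-closeness of the actions into $C^1$-closeness of $F$ to $\iota$, and then invoke openness of embeddings in the $C^1$ topology of a compact manifold. Second, one needs the equivariant refinements of two classical theorems — the Mostow--Palais equivariant embedding theorem and the equivariant tubular neighbourhood theorem — both of which depend essentially on compactness of $G$ (finiteness of the Haar measure, existence of invariant metrics). Everything else — the change of variables, the intertwining identity for $\psi$, and the passage to the isotopy — is formal.
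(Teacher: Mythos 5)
The paper does not prove this theorem --- it is quoted from Palais's 1961 paper \cite{PalaisR61} --- but your argument is exactly Palais's original one (Mostow--Palais equivariant embedding into an orthogonal representation, Haar averaging of the second action, equivariant tubular-neighbourhood retraction, and the observation that a $C^1$-small perturbation of the identity of a compact manifold is a diffeomorphism isotopic to the identity), and it is correct. All the steps you flag as delicate (differentiation under the integral with uniform control over the compact group, $C^1$-openness of embeddings, the equivariant versions of the embedding and tubular neighbourhood theorems) do go through exactly as you describe.
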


In the case of the manifold being symplectic, Palais Theorem was extended to the following Theorem to obtain that the diffeomorphism conjugating the two close symplectic actions is a symplectomorphism. This was proved by Miranda (see \cite{MirandaRSME} or \cite{MirandaMonnierZung}).

\begin{thm}[Miranda]
Let $G$ be a compact Lie group and $(M,\omega)$ a compact symplectic manifold. Let $\rho_1,\rho_2: G \times M \longrightarrow M$ be two symplectic actions which are $C^2$-close. Then, there exists a symplectomorphism $\varphi$ that conjugates $\rho_1$ and $\rho_2$, making them equivalent.
\label{th:palaissymplectic}
\end{thm}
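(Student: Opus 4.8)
The plan is to combine Palais's rigidity theorem (Theorem~\ref{th:palais}) with a $G$-equivariant version of Moser's path method. Since $\rho_1$ and $\rho_2$ are in particular $C^1$-close, Theorem~\ref{th:palais} provides a diffeomorphism $\phi_0$, lying in the arc-connected component of the identity (hence isotopic to $\mathrm{id}_M$), with $\phi_0\circ\rho_1(g)=\rho_2(g)\circ\phi_0$ for all $g\in G$. Setting $\omega_0:=\phi_0^*\omega$, one checks at once that $\omega_0$ is symplectic and $\rho_1$-invariant, because $\rho_1(g)^*\omega_0=(\phi_0\circ\rho_1(g))^*\omega=(\rho_2(g)\circ\phi_0)^*\omega=\phi_0^*\omega=\omega_0$. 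Thus the task reduces to producing a $\rho_1$-equivariant symplectomorphism between the two $\rho_1$-invariant symplectic forms $\omega$ and $\omega_0$ on $M$; composing it with $\phi_0$ will then give the symplectomorphism $\varphi$ we are after.

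To build it, I would run the equivariant Moser argument for $\omega$ and $\omega_0$. As $\phi_0$ is isotopic to the identity, $\phi_0^*$ acts trivially on $H^2_{\mathrm{dR}}(M)$, so $\omega_0-\omega=d\alpha$ for some $1$-form $\alpha$; averaging over $G$ with respect to $\rho_1$ and Haar measure replaces $\alpha$ by a $\rho_1$-invariant primitive $\bar\alpha$ with $d\bar\alpha=\omega_0-\omega$, using that $\omega_0-\omega$ is already $\rho_1$-invariant. Since $\rho_1$ and $\rho_2$ are close, $\phi_0$ can be taken $C^1$-close to $\mathrm{id}_M$, so $\omega_0$ is $C^0$-close to $\omega$ and, by compactness of $M$, the path $\omega_t:=\omega+t(\omega_0-\omega)$, $t\in[0,1]$, consists of symplectic forms. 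Define $Y_t$ by $\iota_{Y_t}\omega_t=-\bar\alpha$; it is $\rho_1$-invariant because $\omega_t$ and $\bar\alpha$ are, so its flow $\psi_t$ (which exists for $t\in[0,1]$ by compactness) commutes with each $\rho_1(g)$. The standard computation $\frac{d}{dt}(\psi_t^*\omega_t)=\psi_t^*\big(d\iota_{Y_t}\omega_t+(\omega_0-\omega)\big)=\psi_t^*\big(-d\bar\alpha+d\bar\alpha\big)=0$ then yields $\psi_1^*\omega_0=\omega$.

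Finally I set $\varphi:=\phi_0\circ\psi_1$. Then $\varphi^*\omega=\psi_1^*\phi_0^*\omega=\psi_1^*\omega_0=\omega$, so $\varphi$ is a symplectomorphism, and $\varphi\circ\rho_1(g)=\phi_0\circ\psi_1\circ\rho_1(g)=\phi_0\circ\rho_1(g)\circ\psi_1=\rho_2(g)\circ\phi_0\circ\psi_1=\rho_2(g)\circ\varphi$, so $\varphi$ conjugates $\rho_1$ and $\rho_2$, as required.

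The step I expect to be the real obstacle is the regularity bookkeeping rather than these formal identities. Palais's theorem delivers only a $C^1$ diffeomorphism $\phi_0$, so a priori $\omega_0=\phi_0^*\omega$, $\bar\alpha$ and $Y_t$ have limited differentiability, while the Moser step requires integrating the time-dependent vector field $Y_t$, for which one needs enough smoothness to invoke existence, uniqueness and smooth dependence of solutions of ODEs. This is precisely the point at which $C^2$-closeness is needed in place of $C^1$-closeness: the path method costs an extra degree of differentiability, and, as explained in the introduction, this is exactly the cost that the cotangent-lift argument of Section~\ref{sec:RigidityCotLift} avoids by transporting Palais's diffeomorphism directly rather than integrating a Moser vector field.
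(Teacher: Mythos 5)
Your proposal is correct and follows essentially the same route the paper indicates for Theorem~\ref{th:palaissymplectic}: the paper does not spell out a proof but states that one takes Palais's diffeomorphism and then applies the Moser path method with a (De~Rham homotopy) primitive to make the conjugating map symplectic, which is exactly your equivariant Moser argument with the averaged primitive $\bar\alpha$. Your closing remark correctly identifies why the hypothesis is $C^2$-closeness rather than $C^1$, matching the discussion in the introduction.
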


In the proof of Theorem \ref{th:palaissymplectic}, the diffeomorphism given by Palais Theorem is used, together with the Moser path method and a De Rham homotopy operator, to prove that the symplectic structure is equivariantly preserved.

\subsection{The cotangent lift of a group action}
\label{sec:cotangentlift}
The cotangent bundle of a smooth manifold can be naturally equipped with a symplectic structure in the following way. Let $M$ be a differential manifold and consider its cotangent bundle $T^{*}M$. There is an intrinsic canonical linear form $\lambda$ on $T^{*}M$ defined pointwise by
\begin{equation*}
    \langle \lambda_p,v\rangle=\langle p,d\pi_pv\rangle, \hspace{25pt} p=(m,\xi)\in T^{*}M,v\in T_p(T^{*}M),
\end{equation*}
where $d\pi_p:T_p(T^{*}M)\longrightarrow T_mM$ is the differential of the canonical projection at $p$. In local coordinates $(q_i,p_i)$, the form is written as $\lambda=\sum_i p_i\,dq_i$ and is called the \textit{Liouville 1-form}. Its differential $\omega=d\lambda=\sum_i dp _i\wedge dq_i$ is a symplectic form on $T^{*}M$.

\begin{defn}
Let $\rho:G\times M \longrightarrow M$ be a group action of a Lie group $G$ on a smooth manifold $M$. For each $g\in G$, there is an induced diffeomorphism $\rho_g:M\longrightarrow M$. The \textit{cotangent lift of $\rho_g$}, denoted by $\hat{\rho}_g$, is the diffeomorphism on $T^{*}M$ given by
\begin{equation*}
    \hat{\rho}_g(q,p):=(\rho_g(q),((d{\rho_g)}_q^{*})^{-1}(p)),\hspace{25pt}(q,p)\in T^*M
\end{equation*}
which makes the following diagram commute:
\begin{center}
\begin{tikzpicture}
  \matrix (m) [matrix of math nodes,row sep=4em,column sep=4em,minimum width=2em]
  {T^*M & T^*M \\
   M & M\\};
  \path[-stealth]
    (m-1-1) edge node [right] {$\pi$} (m-2-1)
            edge node [above] {$\hat{\rho}_g$} (m-1-2)
    (m-2-1) edge node [above] {${\rho}_g$} (m-2-2)
    (m-1-2) edge node [right] {$\pi$} (m-2-2);
\end{tikzpicture}
\end{center}
\end{defn}

\begin{lem}
The induced diffeomorphism $\hat{\rho}_g$ preserves the form $\lambda$ and, hence, preserves the symplectic form $\omega$.
\label{lem:cotangentliftpres1form}
\end{lem}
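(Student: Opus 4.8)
The plan is to prove the stronger statement $\hat{\rho}_g^*\lambda = \lambda$ directly; since the exterior derivative commutes with pullback, this immediately yields $\hat{\rho}_g^*\omega = \hat{\rho}_g^* d\lambda = d(\hat{\rho}_g^*\lambda) = d\lambda = \omega$, so the assertion about $\omega$ is a formal consequence of the assertion about $\lambda$.

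To establish $\hat{\rho}_g^*\lambda = \lambda$, I would argue pointwise using only the intrinsic definition of the Liouville form. Fix $p=(m,\xi)\in T^*M$ and $v\in T_p(T^*M)$, and set $q:=\hat{\rho}_g(p)=(\rho_g(m),\,((d\rho_g)_m^*)^{-1}(\xi))$. By definition of the pullback and of $\lambda$,
\[
(\hat{\rho}_g^*\lambda)_p(v)=\lambda_q\big((d\hat{\rho}_g)_p v\big)=\big\langle ((d\rho_g)_m^*)^{-1}(\xi),\ d\pi_q\big((d\hat{\rho}_g)_p v\big)\big\rangle .
\]
Differentiating the commuting square $\pi\circ\hat{\rho}_g=\rho_g\circ\pi$ at the point $p$ gives $d\pi_q\circ(d\hat{\rho}_g)_p=(d\rho_g)_m\circ d\pi_p$, so the expression above becomes $\big\langle ((d\rho_g)_m^*)^{-1}(\xi),\ (d\rho_g)_m(d\pi_p v)\big\rangle$.

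The key algebraic step is the adjunction identity: for a linear isomorphism $A$ and a covector $\eta$ one has $\langle (A^*)^{-1}\eta,\ A w\rangle=\langle A^*(A^*)^{-1}\eta,\ w\rangle=\langle \eta,\ w\rangle$. Applying this with $A=(d\rho_g)_m$, $\eta=\xi$, and $w=d\pi_p v$ yields $\big\langle ((d\rho_g)_m^*)^{-1}(\xi),\ (d\rho_g)_m(d\pi_p v)\big\rangle=\langle \xi,\ d\pi_p v\rangle=\langle \lambda_p, v\rangle$, which is exactly $\lambda_p(v)$. Since $p$ and $v$ were arbitrary, $\hat{\rho}_g^*\lambda=\lambda$, and the lemma follows.

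The computation is essentially bookkeeping, and there is no genuine obstacle: $\hat{\rho}_g$ is a diffeomorphism by construction and all maps involved are smooth. The only point that requires care is keeping track of the direction of the dual maps, i.e.\ checking that the fibre component of $\hat{\rho}_g$ was defined precisely as the inverse transpose $((d\rho_g)_m^*)^{-1}$ so that it cancels against the differential $(d\rho_g)_m$ produced by the commuting diagram. One could equivalently verify this in Darboux coordinates $(q_i,p_i)$ using $\lambda=\sum_i p_i\,dq_i$, but the intrinsic argument above avoids coordinates and makes the cancellation transparent.
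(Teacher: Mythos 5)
Your proof is correct and follows essentially the same route as the paper's: both establish $\hat{\rho}_g^*\lambda=\lambda$ from the intrinsic definition of the Liouville form, the differentiated identity $\pi\circ\hat{\rho}_g=\rho_g\circ\pi$, and the cancellation of $(d\rho_g)_m^*$ against its inverse, then deduce $\hat{\rho}_g^*\omega=\omega$ since $d$ commutes with pullback. The only difference is presentational: the paper writes the computation as a chain of equalities of covectors using pullback notation, while you evaluate against an arbitrary tangent vector $v$.
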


\begin{proof}
We will prove that, in general, that given a diffeomorphism $\rho:M\longrightarrow M$, its cotangent lift preserves the canonical form $\lambda$. At a point $p=(m,\xi)\in T^*M$, we have:
\begin{align*}
    \lambda_p&=(d\pi)^*_p\xi=\\
    &=(d\pi)^*_p(d\rho)^*_{m}\left((d\rho)^*_{m}\right)^{-1}\xi=\\
    &=(d(\rho\circ\pi))^*_{p}\left((d\rho)^*_{m}\right)^{-1}\xi=\\
    &=(d(\pi\circ\hat\rho))^*_{p}\left((d\rho)^*_{m}\right)^{-1}\xi=\\
    &=(d\hat\rho)^*_{p}(d\pi)^*_{\hat{\rho}(p)}\left((d\rho)^*_{m}\right)^{-1}\xi=\\
    &=(d\hat\rho)^*_{p}\lambda_{\hat\rho(p)},
\end{align*}
where we used the definitions of the Liouville $1$-form and the cotangent lift and the fact that $\rho\circ\pi=\pi\circ\hat\rho$. Then, the canonical $1$-form is preserved by $\hat\rho$.

As a consequence:
$$\hat\rho^*(\omega)=\hat\rho^*(d\lambda)=d(\hat\rho^*\lambda)=d\lambda=\omega.$$

So, the cotangent lift $\hat{\rho}_g$ preserves the Liouville form and the symplectic form of $T^*M$.
\end{proof}

The following two examples contain the explicit computations and expressions of simple cotangent lifts which indeed give rise to hyperbolic and focus-focus pieces respectively of an integrable system with non-degenerate singularities (as we will see in the next subsection).

\begin{examp}
Consider the action of $(\bbR,+)$ on $\bbR$ given by:
$$\begin{array}{rccc}
  \rho:& \bbR\times\bbR &\longrightarrow &\bbR\\
  &\left(t,q\right) &\longmapsto &e^{-t} q
\end{array}$$
and the induced an action $\rho_t:\bbR \longrightarrow \bbR$. The differential of $\rho_t$ at a point $q\in\bbR$ is:

$$\begin{array}{rccc}
  (d\rho_t)_q: & T_q\bbR & \longrightarrow & T_q\bbR\\
  & p & \longmapsto & e^{-t} p
\end{array}$$


Then, $((d\rho_t)_q^*)^{-1}$ acts as $p \longmapsto e^{t}p$, and the cotangent lift $\hat{\rho}_t$ associated to the group action $\rho_t$, in coordinates $(q,p)$ of $T^{*}\bbR$ is exactly:
$$\begin{array}{rccc}
\hat{\rho}: & T^{*}\bbR &\longrightarrow & T^{*}\bbR\\
    & \begin{pmatrix}q\\p\end{pmatrix}&\longmapsto &
    \begin{pmatrix}e^{-t} q\\e^t p \end{pmatrix}
\end{array}$$
\label{examp:hyperboliccotangentlift}
\end{examp}

\begin{examp}
Consider the action of a rotation and a radial dilation on $\mathbb{R}^2$ given by:
$$\begin{array}{rcccl}
  \rho : &(S^1\times\mathbb{R})\times\mathbb{R}^2 &\longrightarrow & \mathbb{R}^2 &\\
  &\large((\theta,t),\begin{pmatrix}
            x_1\\
            x_2
            \end{pmatrix}\large)
            & \longmapsto &
            \rho_{\theta,t}\begin{pmatrix}
            x_1\\
            x_2
            \end{pmatrix}
            &=e^{-t}
            \begin{pmatrix}
            \cos{\theta} & \sin{\theta}\\
            -\sin{\theta} & \cos{\theta}
            \end{pmatrix}
            \begin{pmatrix}
            x_1\\
            x_2
            \end{pmatrix}
\end{array}$$

The differential of the induced action $\rho_{\theta,t}$ at a point $x=(x_1,x_2)$ is the following linear map:

$$\begin{array}{rccc}
d\rho_{\theta,t} : &T_{x}\mathbb{R}^2 &\longrightarrow & T_{x}\mathbb{R}^2\\
    &\begin{pmatrix}
            y_1\\
            y_2
            \end{pmatrix}           &\longmapsto&
            e^{-t}\begin{pmatrix}
            y_1\cos{\theta}+y_2\sin{\theta}\\
            -y_1\sin{\theta}+y_2\cos{\theta}
            \end{pmatrix}
    \label{eq:liftff}
\end{array}$$

Then, $((d\rho_{\theta,t})^*)^{-1}$ acts as:
$$         \begin{pmatrix}
            y_1\\
            y_2
            \end{pmatrix}\longmapsto
            e^t\begin{pmatrix}
            \cos\theta & \sin\theta\\
            -\sin\theta & \cos\theta
            \end{pmatrix}
            \begin{pmatrix}
            y_1\\
            y_2
            \end{pmatrix}$$

And the cotangent lift $\hat{\rho}_{\theta,t}$ associated to the group action is:
$$\begin{array}{rccc}
\hat{\rho}_{\theta,t} :& T^{*}\mathbb{R}^2 &\longrightarrow & T^{*}\mathbb{R}^2\\
    &\begin{pmatrix}
            x_1\\
            x_2\\
            y_1\\
            y_2
            \end{pmatrix}           &\longmapsto&
            \begin{pmatrix}
            e^{-t}(x_1\cos{\theta}+x_2\sin{\theta})\\
            e^{-t}(-x_1\sin{\theta}+x_2\cos{\theta})\\
            e^t(y_1\cos{\theta}+y_2\sin{\theta})\\
            e^t(-y_1\sin{\theta}+y_2\cos{\theta})
            \end{pmatrix}
    \label{eq:diff2}
\end{array}$$
\label{examp:focusfocuscotangentlift}
\end{examp}

The cotangent lift of a Lie group $G$ on a manifold $M$, which is an action on $(T^*M,\omega_{T^*M})$, is automatically  Hamiltonian (see for instance \cite{GuilleminSternbergSTP}). This makes the cotangent lift a natural and powerful tool for the formulation of integrable systems, specially in the context of mechanics.

\subsection{Semilocal description of non-degenerate singularities in integrable Hamiltonian systems}
\label{sec:SingIntSys}

A Hamiltonian system is completely integrable if it is defined by $n$ first integrals in involution with respect to the Poisson bracket. Completely integrable Hamiltonian systems are closely related to Lagrangian foliations through the following result.

\begin{prop}
Let $f_1, \dots,f_n$ be $n$ functions such that $\{f_i,f_j\}=0,\forall i,j$. Suppose that $d_p f_1\wedge\dots \wedge d_pf_n\neq 0$ at a point $p\in M$. Then, the distribution generated by the Hamiltonian vector fields $\mathcal D=\langle X_{f_1},\dots, X_{f_n} \rangle$ is involutive and the leaf through $p$ is a Lagrangian submanifold.
\label{prop:LagrFolHamIntSys}
\end{prop}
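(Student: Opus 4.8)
The plan is to handle the two assertions separately, using throughout the two standard identities linking the Poisson bracket with the symplectic structure: $\omega(X_f,X_g)=\{f,g\}$, and $[X_f,X_g]=X_{\{f,g\}}$ (the latter being a reformulation of the Jacobi identity for $\{\cdot,\cdot\}$).

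First I would establish involutivity of $\mathcal D$. Since $\{f_i,f_j\}=0$ for all $i,j$, the second identity gives $[X_{f_i},X_{f_j}]=X_0=0$, so the bracket of any two generators of $\mathcal D$ lies (trivially) in $\mathcal D$; hence $\mathcal D$ is involutive, and in fact the flows of the $X_{f_i}$ pairwise commute. To invoke the Frobenius theorem I also need $\mathcal D$ to have constant rank near $p$. The hypothesis $d_pf_1\wedge\dots\wedge d_pf_n\neq 0$ means the covectors $d_pf_i$ are linearly independent; since $\omega$ is nondegenerate, the bundle isomorphism $v\mapsto\iota_v\omega$ carries $X_{f_i}(p)$ to $\pm d_pf_i$, so $X_{f_1}(p),\dots,X_{f_n}(p)$ are linearly independent. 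Linear independence is an open condition, so the $X_{f_i}$ stay independent on a neighbourhood $U$ of $p$, and $\mathcal D|_U$ is a rank-$n$ involutive distribution, hence integrable. Let $L$ be the integral leaf through $p$.

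Next I would check that $L$ is Lagrangian. At each $q\in L$ we have $T_qL=\langle X_{f_1}(q),\dots,X_{f_n}(q)\rangle$, which is $n$-dimensional, and $\omega_q(X_{f_i}(q),X_{f_j}(q))=\{f_i,f_j\}(q)=0$ for all $i,j$, so $T_qL$ is isotropic in the symplectic vector space $(T_qM,\omega_q)$. A completely integrable system on $M$ forces $\dim M=2n$, so $\dim T_qL=n=\tfrac12\dim M$; an isotropic subspace of half the ambient dimension is automatically Lagrangian, and therefore $L$ is a Lagrangian submanifold.

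The only step that deserves a little care — rather than being a genuine obstacle — is the passage from pointwise nondegeneracy at $p$ to constant rank on a neighbourhood, which is what makes Frobenius applicable, together with the implicit dimension bookkeeping $\dim M=2n$ that renders ``isotropic of dimension $n$'' equivalent to ``Lagrangian''. Everything else follows directly from the two bracket identities.
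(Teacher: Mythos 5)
Your argument is correct and complete: involutivity from $[X_{f_i},X_{f_j}]=X_{\{f_i,f_j\}}=0$, constant rank near $p$ via nondegeneracy of $\omega$ and openness of linear independence, then Frobenius, and finally isotropy plus the dimension count $\dim T_qL=n=\tfrac12\dim M$ to conclude Lagrangian. The paper states this proposition as a standard preliminary without proof, and what you wrote is precisely the standard argument one would supply.
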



The dynamics of an integrable system $F=(f_1,\dots,f_n)$ is explained by the Arnold-Liouville-Mineur Theorem at the regular points, namely, at the points of the manifold where the differential $dF=(df_1, \dots,df_n)$ is not singular.

\begin{thm}[Arnold-Liouville-Mineur]
Let $(M^{2n},\omega)$ be a symplectic manifold. Let $f_1,\ldots,f_n$ functions on $M$ which are functionally independent (i.e. $df_1\wedge\dots\wedge df_n\neq 0$) on a dense set and which are pairwise in involution. Assume that $m$ is a regular point of $F=(f_1,\ldots,f_n)$ and that the level set of $F$ through $m$, which we denote by $\mathcal F_m$, is compact and connected.

Then, $\mathcal F_m$ is a torus and on a neighbourhood $U$ of $\mathcal F_m$ there exist ${\bbR}$-valued smooth functions $(p_1,\dots,p_n)$ and ${\bbR}/{\bbZ}$-valued smooth functions $({\theta_1},\dots,{\theta_n})$ such that:
\begin{enumerate}
    \item The functions $(\theta_1,\dots,\theta_n,p_1,\dots,p_n )$ define a diffeomorphism $U\simeq\bbT^n\times B^{n}$.
    \item The symplectic structure can be written in terms of these coordinates as
    \begin{equation*}
       \omega=\sum_{i=1}^n d \theta_i \wedge dp_i.
    \end{equation*}
    \item The leaves of the surjective submersion $F=(f_1,\dots,f_{s})$ are given by the projection onto the
      second component $\bbT^n \times B^{n}$, in particular, the functions $f_1,\dots,f_s$ depend only on
      $p_1,\dots,p_n$.
\end{enumerate}
The coordinates $p_i$ are called action coordinates; the coordinates $\theta_i$ are called angle coordinates.
\label{thm:ALM}
\end{thm}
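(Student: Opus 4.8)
The plan is to reduce the statement, via the joint flow of the commuting Hamiltonian vector fields $X_{f_1},\dots,X_{f_n}$, to linear algebra on a torus, and then to extract the action coordinates from period integrals of the Liouville form. I will proceed in three stages: (i) identify the topology of the fibre $\mathcal{F}_m$; (ii) trivialise the Lagrangian fibration near $\mathcal{F}_m$ and build the angle coordinates; (iii) build the action coordinates and put $\omega$ into normal form.

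\emph{Stage (i): $\mathcal{F}_m$ is a torus.} Since $m$ is a regular point, $dF$ has rank $n$ on a neighbourhood of $\mathcal{F}_m$, so after shrinking we may assume $F$ is a submersion there and $\mathcal{F}_m$ is a compact connected $n$-manifold. Involutivity forces each $f_j$ to be constant along the flow of $X_{f_i}$, so the $X_{f_i}$ are tangent to $\mathcal{F}_m$; by Proposition~\ref{prop:LagrFolHamIntSys} they are linearly independent there and span $T\mathcal{F}_m$. They commute, $[X_{f_i},X_{f_j}]=X_{\{f_i,f_j\}}=0$, and by compactness of $\mathcal{F}_m$ their flows are complete on it, so $t=(t_1,\dots,t_n)\mapsto\phi_t:=\phi^1_{t_1}\circ\cdots\circ\phi^n_{t_n}$ is a smooth, locally free, transitive action of $\bbR^n$ on $\mathcal{F}_m$. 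Hence $\mathcal{F}_m\cong\bbR^n/\Lambda$ for the stabiliser lattice $\Lambda\subset\bbR^n$, and compactness forces $\rank\Lambda=n$, i.e.\ $\mathcal{F}_m\cong\bbT^n$.

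\emph{Stage (ii): trivialisation and angle coordinates.} The submersion $F$ gives, near $\mathcal{F}_m$, a fibration $U\to B^n$ over a ball $B^n\subset\bbR^n$ of regular values, with compact connected fibres; fix a smooth section $\sigma:B^n\to U$. The fibre over $b$ is $\bbR^n/\Lambda_b$, where $\Lambda_b$ is the stabiliser lattice of the $\bbR^n$-action on that fibre, and $b\mapsto\Lambda_b$ is a smooth bundle of lattices over $B^n$; as $B^n$ is simply connected it is trivial and admits a smooth global frame $e_1(b),\dots,e_n(b)$. Writing $A(b)\in GL(n,\bbR)$ for the matrix with columns $e_i(b)$ and setting $Y_i=\sum_j A_{ji}(F)\,X_{f_j}$, the $Y_i$ are commuting (because the coefficients $A_{ji}(F)$ are annihilated by every $X_{f_k}$), complete on each fibre, and have time-$1$ flow equal to the identity there. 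Then $(s,b)\mapsto\psi_s(\sigma(b))$, for $s\in\bbR^n$ and $\psi$ the joint flow of the $Y_i$, depends on $s$ only modulo $\bbZ^n$ and descends to a diffeomorphism $\bbT^n\times B^n\to U$, which is (1); in these coordinates $F$ is the projection to $B^n$, and $\partial/\partial\theta_i=Y_i$, where $\theta_1,\dots,\theta_n$ are the induced $\bbR/\bbZ$-valued functions. Point (3) will follow once $B^n$ is re-coordinatised by the action coordinates in Stage (iii).

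\emph{Stage (iii): action coordinates and normal form of $\omega$.} By Proposition~\ref{prop:LagrFolHamIntSys} the fibres of $F$ are Lagrangian, so $\omega$ vanishes on the fibre $\bbT^n\times\{0\}$, which is a deformation retract of $U$; hence $[\omega|_U]=0$ in $H^2(U)$ and $\omega|_U=d\beta$ for some $1$-form $\beta$. For the basic cycle $\gamma_i(b)$ traced on the fibre over $b$ by the flow of $Y_i$, define $p_i(b)=\oint_{\gamma_i(b)}\beta$, with a normalisation chosen so that $\iota_{Y_i}\omega=dp_i$; a standard computation (differentiating under the integral sign and using Cartan's formula) shows the period integrals have this property and are independent of the choice of primitive $\beta$ and of the base point on the fibre. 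The $p_i$ are then functions of $F$, hence pairwise Poisson-commute, and they are functionally independent because the $\gamma_i(b)$ form a basis of $H_1$ of each fibre; so $(\theta_1,\dots,\theta_n,p_1,\dots,p_n)$ are coordinates on $U$ and, since $F$ factors through them, (3) holds. Finally, $\iota_{\partial/\partial\theta_i}\omega=dp_i$ together with $d\omega=0$ forces $\omega=\sum_i d\theta_i\wedge dp_i+\tau$, where $\tau$ involves only the $dp_i$ with coefficients depending only on the $p_i$ and is closed; since $B^n$ is contractible, $\tau=d\alpha$ with $\alpha=\sum_i\alpha_i(p)\,dp_i$, and replacing $\theta_i$ by $\theta_i+\alpha_i(p)$ absorbs $\tau$, giving $\omega=\sum_i d\theta_i\wedge dp_i$, which is (2).

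The main obstacle is Stage (iii): one has to check carefully that the period integrals $p_i(b)$ are smooth and functionally independent in $b$, and above all that the Hamiltonian vector field of $p_i$ is \emph{exactly} $Y_i$, i.e.\ that it generates the cycle $\gamma_i$ with the period normalisation fixed in Stage (ii). This compatibility is precisely what links the two halves of the coordinate system and forces the cross term of $\omega$ to come out as the identity matrix rather than an arbitrary invertible one. The trivialisation in Stage (ii) also needs some care (smoothness of the lattice frame $e_i(b)$, working fibrewise because the $Y_i$ need not be complete on the non-compact $U$), but these points are routine given compactness of the fibres.
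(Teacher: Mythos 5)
The paper does not prove Theorem~\ref{thm:ALM}: it is quoted as a classical result (Liouville--Mineur--Arnold, in the formulation of Duistermaat), and the paper's only ``proof-adjacent'' content is the reformulation via the cotangent model in Theorem~\ref{thm:cotangentliftALM}, taken from \cite{KiesenhoferMiranda}. So there is nothing in the text to compare your argument against line by line; what you have written is the standard proof, and it is essentially correct. Stage (i) (transitive locally free $\bbR^n$-action from the commuting complete Hamiltonian flows, hence $\mathcal F_m\cong\bbR^n/\Lambda\cong\bbT^n$) and Stage (iii) (exactness of $\omega$ on $U$ because $U$ retracts onto a Lagrangian torus, Mineur's period-integral formula $p_i=\oint_{\gamma_i}\beta$, and the final absorption of the closed basic term $\tau$ by a shift of the angles) are exactly the classical route. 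Two places deserve to be tightened rather than asserted: first, the smoothness of the period lattice $b\mapsto\Lambda_b$ is an implicit-function-theorem statement (regularity of the return times of the joint flow), not a formality; second, the identity $\iota_{Y_i}\omega=dp_i$ is not a ``normalisation chosen'' but the content of Mineur's computation (Stokes on the cylinder swept by $\gamma_i$ under a transverse vector field), and once you have it, functional independence of the $p_i$ follows more cleanly from nondegeneracy of $\omega$ and linear independence of the $Y_i$ than from the homological argument you invoke. With those two points made precise, the argument is complete and matches the standard references.
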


The Arnold-Liouville-Mineur Theorem was restated by Kiesenhofer and Miranda in \cite{KiesenhoferMiranda} revealing that at a semilocal level the regular leaves are equivalent to a completely toric cotangent lift model.

\begin{thm}
Let $F=(f_1,\dots,f_n)$ be an integrable system on a symplectic manifold $(M,\omega)$. Then, semilocally around a regular Liouville torus, the system is equivalent to the cotangent model $(T^* \bbT^n)_{can}$ restricted to a neighbourhood of the zero section $(T^* \bbT^n)_0$ of $T^* \bbT^n$.
\label{thm:cotangentliftALM}
\end{thm}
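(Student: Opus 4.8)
The plan is to read the statement off from the Arnold--Liouville--Mineur Theorem (Theorem \ref{thm:ALM}) after recognizing its action--angle normal form as the canonical cotangent model. First I would apply Theorem \ref{thm:ALM} at the Liouville torus $\mathcal F_m$: it yields a neighbourhood $U$ of $\mathcal F_m$, coordinates $(\theta_1,\dots,\theta_n,p_1,\dots,p_n)$ realizing a diffeomorphism $U\simeq\bbT^n\times B^n$ under which $\omega=\sum_{i=1}^n d\theta_i\wedge dp_i$, and along which each $f_j$ depends only on $(p_1,\dots,p_n)$. After a translation in the action coordinates (which preserves both $\omega$ and the Liouville foliation) I may assume that the value $p(\mathcal F_m)$ is $0$, so that $\mathcal F_m=\{p=0\}$ and $B^n$ is a ball around the origin.

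Second, I would spell out the canonical side. On $T^*\bbT^n$, with base coordinates $\theta_i\in\bbR/\bbZ$ and fibre coordinates $a_i$, the Liouville $1$-form is $\lambda=\sum_i a_i\,d\theta_i$ and $\omega_{can}=d\lambda=\sum_i da_i\wedge d\theta_i$; the translation action of $\bbT^n$ on itself lifts to $(\theta,a)\mapsto(\theta+s,a)$, whose moment map is $\mu(\theta,a)=a$, so the integrable system $(T^*\bbT^n)_{can}$ has Liouville foliation $\{a=\mathrm{const}\}$ and zero section $(T^*\bbT^n)_0=\{a=0\}$. Now define $\phi:U\to T^*\bbT^n$ by $\phi(\theta,p)=(\theta,-p)$. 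It is a diffeomorphism onto a neighbourhood of $(T^*\bbT^n)_0$, and $\phi^*\omega_{can}=\sum_i d(-p_i)\wedge d\theta_i=\sum_i d\theta_i\wedge dp_i=\omega$, so $\phi$ is a symplectomorphism. (The sign merely records the sign convention in $\omega_{can}$; composing with the fibrewise involution $a\mapsto-a$, itself a symplectomorphism of $(T^*\bbT^n,\omega_{can})$, one may take $\phi(\theta,p)=(\theta,p)$ instead.)

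Third, I would check that $\phi$ intertwines the two integrable systems in the sense of \cite{KiesenhoferMiranda}. By item (3) of Theorem \ref{thm:ALM} the leaf of $F$ through a point of $U$ is $\{p=\mathrm{const}\}$, and $\phi$ carries $\{p=c\}$ to $\{a=-c\}$, which is exactly a leaf of $(T^*\bbT^n)_{can}$; hence $\phi$ sends Liouville leaves to Liouville leaves. Equivalently, writing $F=G\circ\mathrm{pr}_2$ where $\mathrm{pr}_2:\bbT^n\times B^n\to B^n$ and $G$ is a diffeomorphism onto its image after shrinking $B^n$ (since $dF$ has rank $n$ along $\mathcal F_m$), one has $F\circ\phi^{-1}=\widetilde G\circ\mu$ with $\widetilde G(a)=G(-a)$, i.e. $\phi$ conjugates $F$ to the moment map of the cotangent model up to a diffeomorphism of the base. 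This is precisely the asserted semilocal equivalence between $F$ near $\mathcal F_m$ and $(T^*\bbT^n)_{can}$ restricted to a neighbourhood of $(T^*\bbT^n)_0$.

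I do not expect a genuine obstacle: all the analytic content is already packaged into Theorem \ref{thm:ALM}, so the argument is essentially a change of viewpoint. The only points requiring care are bookkeeping---fixing the sign convention in $\omega_{can}$, translating in the action direction so that the chosen torus $\mathcal F_m$ is carried to the zero section, and making ``equivalence of integrable systems'' precise (a symplectomorphism sending Liouville leaves to Liouville leaves, equivalently intertwining the two moment maps up to a diffeomorphism of the leaf space).
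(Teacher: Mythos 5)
Your proposal is correct and follows essentially the same route as the source the paper cites for this statement (\cite{KiesenhoferMiranda}): the theorem is a repackaging of the action--angle theorem (Theorem \ref{thm:ALM}), identifying the action coordinates with the fibre coordinates of $T^*\bbT^n$ and the angles with the base, so that the Liouville foliation becomes the moment-map foliation of the lifted translation action. Your bookkeeping (the sign convention in $\omega_{can}$ and the translation carrying $\mathcal F_m$ to the zero section) is exactly the care the argument requires.
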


At the singular points, the degeneracy of $dF$ determines in general how difficult is to understand the dynamics, and for the case of non-degenerate singular points there are powerful results. The following definitions give the precise details of these concepts.

\begin{defn}
A point $p\in M^{2n}$ is a \textit{singular point} of an integrable Hamiltonian system given by $F=(f_1, \dots,f_n)$ if the rank of $dF=(df_1, \dots,df_n)$ at $p$ is less than $n$. The singular point $p$ has \textit{rank} $k$ and \textit{corank} of $n-k$ if $\rank(dF)_p=\rank\left((df_1)_p,\dots,(df_n)_p\right)=k$.
\label{def:rankcoranksingpoint}
\end{defn}

\begin{defn}
Let $\mfg$ be a Lie algebra. A \textit{Cartan subalgebra} $\mfh$ is a nilpotent subalgebra of $\mfg$ that is self-normalizing, i.e., if $[X,Y]\in \mfh$ for all $X\in \mfh$, then $Y\in \mfh$. If $\mfg$ is finite-dimensional and semisimple over an algebraically closed field of characteristic zero, a Cartan subalgebra is a maximal abelian subalgebra (a subalgebra consisting of semisimple elements).
\label{def:Cartansubalgebra}
\end{defn}

\begin{defn}
Let $(M^{2n},\omega)$ be a symplectic manifold with an integrable Hamiltonian system of $n$ independent and commuting first integrals $f_1,\dots,f_n$. Consider a singular point $p\in M$ of rank $0$, i.e. $(df_i)_p=0$ for all $i$. It is called a \textit{non-degenerate singular point} if the operators $\omega^{-1}d^2f_1,\dots,\omega^{-1}d^2f_n$ form a Cartan subalgebra in the symplectic Lie algebra $\mathfrak{sp}(2n,\bbR)=\mathfrak{sp}(T_p M,\omega)$.
\label{def:non-degsingularpoint}
\end{defn}

\begin{rem}
The operators $\omega^{-1}d^2f_i$, where $df_i$ is the Hessian of $f_i$, associate a function to the Hessian by visualizing the Hessian as a quadratic form $H(u,v)$ and taking the symplectic dual of the function obtained. A good reference for details of the algebraic construction of the Cartan subalgebra is \cite{BolsinovFomenko}.
\end{rem}

The classification of non-degenerate critical points of the moment map in the real case was obtained by Williamson \cite{Williamson}. In the complex case, all the Cartan subalgebras are conjugate and hence there is only one model for non-degenerate critical points of the moment map.

\begin{thm}[Williamson]
For any Cartan subalgebra $\cC$ of $\mathfrak{sp}(2n,\bbR)$, there exists a symplectic system of coordinates $(x_1,\dots, x_n, y_1,\dots,y_n)$ in $\bbR^{2n}$ and a basis $f_1,\dots,f_n$ of $\cC$ such that each of the quadratic polynomials $f_i$ is one of the following:

\begin{align*}
&\quad f_i = x_i^2 + y_i^2 & {\rm for}& \ \ 1 \leq i \leq k_e \\
&\quad f_i = x_iy_i & {\rm for} &\ \ k_e+1 \leq i \leq k_e+k_h \\
&\begin{cases}f_i = x_i y_{i+1}- x_{i+1} y_i \\
f_{i+1} = x_i y_i + x_{i+1} y_{i+1}\end{cases}
& {\rm for} &\ \ i = k_e+k_h+ 2j-1, \ 1 \leq j \leq k_f
\label{eq:williamsonbasis}
\end{align*}

The three types are called elliptic, hyperbolic and focus-focus, respectively.
\label{thm:WilliamsonCartanSubalgebra}
\end{thm}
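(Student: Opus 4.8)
The plan is to restate the problem as a simultaneous normal form for a maximal family of commuting quadratic Hamiltonians and to extract the blocks from a single generic member of $\cC$. Begin with the standard dictionary on $(\bbR^{2n},\omega)$: the map $f\mapsto A_f:=\omega^{-1}d^2f$, sending a homogeneous quadratic polynomial to its linear Hamiltonian vector field, is a linear isomorphism onto $\mathfrak{sp}(2n,\bbR)$ carrying the Poisson bracket to the Lie bracket up to sign. Under this identification $\cC$ becomes an abelian subalgebra $\mathfrak a\subset\mathfrak{sp}(2n,\bbR)$; since $\mathfrak{sp}(2n,\bbR)$ is semisimple, a nilpotent self-normalizing subalgebra is abelian, of dimension equal to the rank $n$, and consists of semisimple elements (its complexification is a Cartan subalgebra of $\mathfrak{sp}(2n,\bbC)$, to which the classical maximal-abelian description of Definition \ref{def:Cartansubalgebra} applies), so every $A_f$ with $f\in\cC$ is diagonalizable over $\bbC$.

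First I would choose a regular element $A\in\mathfrak a$: one avoiding the finitely many root hyperplanes of the Cartan subalgebra $\mathfrak a\otimes\bbC\subset\mathfrak{sp}(2n,\bbC)$. Such $A$ are dense in $\mathfrak a$ and satisfy $\mathfrak z_{\mathfrak{sp}(2n,\bbR)}(A)=\mathfrak a$. Complexify and decompose $\bbC^{2n}=\bigoplus_\lambda V_\lambda$ into eigenspaces of $A$. Two elementary constraints organize the spectrum: because $A$ is infinitesimally symplectic, $\omega(V_\lambda,V_\mu)=0$ unless $\mu=-\lambda$, with $\omega$ pairing $V_\lambda$ nondegenerately against $V_{-\lambda}$; and because $A$ is real, complex conjugation sends $V_\lambda$ to $V_{\bar\lambda}$. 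Also $0$ is not an eigenvalue of $A$: otherwise $A$ would vanish on a symplectic subspace of positive dimension, which would make $\dim\mathfrak z(A)>n$ and contradict regularity. Hence the eigenvalues fall into sign- and conjugation-closed packets of three kinds — real pairs $\{\pm a\}$, imaginary pairs $\{\pm ib\}$, and quadruples $\{\pm a\pm ib\}$ with $a,b\in\bbR\setminus\{0\}$ — and the real subspaces underlying them give an $\omega$-orthogonal splitting $\bbR^{2n}=\bigoplus_jW_j$ into $A$-invariant symplectic blocks of dimension $2$, $2$, $4$; by the two constraints this splitting is preserved by all of $\mathfrak a=\mathfrak z(A)$.

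Then I would normalize one block at a time. On an imaginary-type $2$-block, a symplectic basis putting $A$ into the standard rotation form realizes $A$ as a nonzero multiple of $x^2+y^2$ and its centralizer in $\mathfrak{sp}(2,\bbR)$ as exactly $\bbR\,(x^2+y^2)$; on a real-type $2$-block one gets, similarly, a basis with $A$ a multiple of $xy$ and centralizer $\bbR\,(xy)$. On a $4$-block with eigenvalues $\pm a\pm ib$ one chooses a compatible complex structure turning the block into a complex symplectic plane on which $A$ acts $\bbC$-linearly with eigenvalues $\pm(a+ib)$; computing the centralizer of this operator in $\mathfrak{sp}(4,\bbR)$ gives a $2$-dimensional real subalgebra which, in suitable real symplectic coordinates $(x_i,x_{i+1},y_i,y_{i+1})$, is spanned by $x_iy_{i+1}-x_{i+1}y_i$ and $x_iy_i+x_{i+1}y_{i+1}$. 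Since $\mathfrak a=\mathfrak z(A)$ splits along the blocks as $\bigoplus_j\mathfrak z_{\mathfrak{sp}(W_j)}(A|_{W_j})$, concatenating the block bases produces a global symplectic basis of $\bbR^{2n}$ in which $\cC$ is spanned by quadratics of precisely the three listed shapes; with $k_e,k_h,k_f$ the numbers of elliptic, hyperbolic and focus-focus blocks, the identity $k_e+k_h+2k_f=n$ follows from $2k_e+2k_h+4k_f=2n$. Finally, since only a basis of $\cC$ is needed, each generator can be rescaled, clearing the leftover constants and yielding the polynomials exactly as stated.

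The crux, I expect, is the block-level centralizer computation combined with the bookkeeping that makes the trichotomy exhaustive: one must verify that a regular element of $\mathfrak a$ really exists and has the claimed spectral type (semisimple, no zero eigenvalue, spectrum stable under $\lambda\mapsto-\lambda$ and $\lambda\mapsto\bar\lambda$), and that on each elementary block the centralizer of $A|_{W_j}$ inside $\mathfrak{sp}(W_j)$ is exactly the one-parameter (elliptic, hyperbolic) or two-parameter (focus-focus) family asserted, so that the restriction of $\cC$ fills it completely and nothing is overlooked. As an alternative one could classify the real Cartan subalgebras of $\mathfrak{sp}(2n,\bbR)$ via Cayley transforms on a $\sigma$-stable Cartan subalgebra of $\mathfrak{sp}(2n,\bbC)$, but the eigenspace argument keeps the entire proof within linear symplectic algebra.
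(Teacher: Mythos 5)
The paper does not actually prove this theorem: it is quoted as a classical result of Williamson, with \cite{BolsinovFomenko} indicated as the reference for the algebraic background, so there is no in-paper argument to compare against. Your proposal is a correct, self-contained proof and it follows the standard route (essentially the one in Bolsinov--Fomenko): pass from quadratic Hamiltonians to $\mathfrak{sp}(2n,\bbR)$, pick a regular element $A$ of the Cartan subalgebra, use that its $2n$ eigenvalues on $\bbC^{2n}$ are distinct and stable under $\lambda\mapsto-\lambda$ and $\lambda\mapsto\bar\lambda$ to get the $\omega$-orthogonal splitting into $2$-, $2$- and $4$-dimensional invariant symplectic blocks, and read off $\cC=\mathfrak z(A)$ blockwise. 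The key supporting facts you invoke are all right: regularity forces $\epsilon_i(A)\neq 0$ and $\epsilon_i(A)\neq\pm\epsilon_j(A)$, so $0$ is not an eigenvalue and each eigenspace is a line, which is what makes every element of $\cC$ preserve the blocks and makes the block-centralizer sum exhaust $\cC$ by the dimension count $k_e+k_h+2k_f=n$.

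Two small points worth making explicit if you write this up. First, on an imaginary-type block the restriction of $A$ is symplectically conjugate to the Hamiltonian vector field of $c(x^2+y^2)$ with $c$ of a fixed sign that cannot be flipped inside $Sp(2,\bbR)$ (rotations by $\theta$ and $-\theta$ are not conjugate in $SL(2,\bbR)$); your final remark that one is only choosing a basis of $\cC$, so each generator may be rescaled by a nonzero (possibly negative) constant, is exactly what disposes of this, but it should be said that the rescaling is by a negative number when needed. Second, the three block-centralizer computations (that $\mathfrak z_{\mathfrak{sp}(2,\bbR)}$ of the rotation, respectively of $\mathrm{diag}(a,-a)$, is one-dimensional, and that the real centralizer in $\mathfrak{sp}(4,\bbR)$ of the operator with eigenvalues $\pm a\pm ib$ is the two-dimensional span of the two focus-focus quadratics) are stated rather than carried out; they are routine, and you correctly flag them as the crux, but the proof is not complete until they are done.
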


\begin{rem}
Notice that the focus-focus components always go by pairs. Because of theorem \ref{thm:WilliamsonCartanSubalgebra}, the triple $(k_e,k_h,k_f)$ at a singular point it is an invariant. It is also an invariant of the orbit of the integrable system through the point \cite{Zung-AL1996}.
\end{rem}

If $p$ is a non-degenerate singularity of the moment map $F$, it is characterized by four integer numbers, the rank $k$ of the singularity and the triple $(k_e,k_h,k_f)$. By the way they are defined, they satisfy $k+k_e+k_h+2k_f=n$, where $n$ is the number of degrees of freedom of the integrable system.

The following is a result of Eliasson \cite{Eliasson90} and Miranda and Zung (\cite{MirandaThesis}, \cite{MirandaCentrEur}, \cite{MirandaZung}).

\begin{thm}[Smooth local linearization]
Given an smooth integrable Hamiltonian system with $n$ degrees of freedom on a symplectic manifold $(M^{2n},\omega)$, the Liouville foliation in a neighborhood of a non-degenerate singular point of rank $k$ and Williamson type $(k_e,k_h,k_f)$ is locally symplectomorphic to the model Liouville foliation, which is the foliation defined by the basis functions of Theorem \ref{thm:WilliamsonCartanSubalgebra} plus "coordinate functions" $f_i=x_i$ for $i=k_e+k_h+2j+1$ to $n$.
\label{thm:locallinearization}
\end{thm}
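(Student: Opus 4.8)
The plan is to linearize the Liouville foliation in three stages — reduce the rank, normalize the foliation to all finite orders, then remove the flat remainder — and to straighten the symplectic form by a path argument run in parallel. First I would reduce to a rank-$0$ singularity. Near a singular point $p$ of rank $k$ the differentials $(df_1)_p,\dots,(df_k)_p$ (after relabelling) are independent and, by involutivity, span an isotropic subspace of $T_pM$; a Darboux-type argument relative to the corresponding local Hamiltonian $\bbR^k$-action — the singular counterpart, along the orbit through $p$, of the Arnold--Liouville--Mineur Theorem \ref{thm:ALM} — produces symplectic coordinates $(\theta_1,p_1,\dots,\theta_k,p_k)$ with $f_i=p_i$ for $i\le k$, transverse to which the remaining integrals cut out an integrable system with $n-k$ degrees of freedom having a rank-$0$ singularity. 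This accounts for the coordinate functions $f_i=x_i$ in the statement, and from now on we may assume $(df_i)_p=0$ for all $i$.

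Next I would normalize the quadratic parts, extract a torus symmetry, and normalize formally. Writing $f_i=q_i+O(3)$ with $q_i$ the Hessian regarded as a quadratic form, non-degeneracy (Definition \ref{def:non-degsingularpoint}) means $\langle\omega^{-1}d^2f_i\rangle$ is a Cartan subalgebra of $\mathfrak{sp}(2n,\bbR)$, so Williamson's Theorem \ref{thm:WilliamsonCartanSubalgebra} supplies a linear symplectic change of coordinates putting each $q_i$ into elliptic, hyperbolic or focus-focus form. The $k_e$ elliptic quadratics and the rotational half $x_iy_{i+1}-x_{i+1}y_i$ of each focus-focus pair generate $2\pi$-periodic flows, so the linear model carries a Hamiltonian $\bbT^{k_e+k_f}$-action fixing $p$; one checks the genuine system does too — the relevant $f_i$ generate flows periodic to leading order, and Bochner linearization of the resulting compact action fixing $p$ pins it to the model. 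Then, for $m=3,4,\dots$ in turn, one removes the degree-$m$ part of $f_i-q_i$ by conjugating the whole family with the time-one flow of a homogeneous polynomial Hamiltonian of degree $m$; the cohomological equation at step $m$ is governed by the adjoint action of $\mfh=\langle q_1,\dots,q_n\rangle$ on homogeneous polynomials, and it is precisely the Cartan-subalgebra property that renders that action semisimple enough for the relevant coboundary operator to be surjective onto the space one must hit, so the obstruction vanishes degree by degree. This yields a formal symplectic transformation conjugating the foliation of the $f_i$ to that of the $q_i$.

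The crux is to upgrade this formal statement to a smooth one. Borel's lemma realizes the formal transformation by an honest smooth symplectomorphism, after which each $f_i-q_i$ is flat at $p$; the remaining task is to kill this flat remainder while preserving the foliation and the symplectic form. Along the elliptic and focus-focus directions this is handled using the $\bbT^{k_e+k_f}$-symmetry secured above together with an averaging argument — this is Eliasson's contribution. The hyperbolic directions are the genuine difficulty: there is no compact symmetry to average against, and one must argue directly, through a careful analysis of the local dynamics of the commuting hyperbolic Hamiltonians and a Sternberg--Chen-type linearization of their joint flow, which is the heart of the Miranda--Zung argument. I expect this flat-remainder step, and specifically its hyperbolic case, to be the main obstacle of the whole proof.

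Finally I would straighten the symplectic form. Once the Liouville foliation is the linear one, $\omega$ defines the same foliation as the model form $\omega_0=\sum_i dx_i\wedge dy_i$ and agrees with it at $p$; a Moser deformation $\omega_t=\omega_0+t(\omega-\omega_0)$ together with a relative Poincaré lemma adapted to the foliation — so that the primitive, hence the time-dependent vector field integrating the path, stays tangent to the leaves — produces a foliation-preserving diffeomorphism carrying $\omega$ to $\omega_0$. In practice this step is interleaved with the smoothing rather than performed afterwards, but the logical content is unchanged. Composing all the transformations gives the claimed local symplectomorphism onto the Williamson model of Theorem \ref{thm:WilliamsonCartanSubalgebra} augmented by the coordinate functions along the rank directions. \QED
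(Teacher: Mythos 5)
The paper offers no proof of this statement: Theorem \ref{thm:locallinearization} is quoted as a known result of Eliasson \cite{Eliasson90} and Miranda--Zung (\cite{MirandaThesis}, \cite{MirandaCentrEur}, \cite{MirandaZung}), so there is no in-paper argument to compare yours against. Your sketch is a faithful reconstruction of the architecture of that literature proof: rank reduction, Williamson normal form for the Hessians, formal normalization driven by the Cartan-subalgebra property, Borel summation, removal of the flat remainder (averaging against the $\bbT^{k_e+k_f}$-symmetry in the elliptic and focus-focus directions, a genuinely harder direct argument in the hyperbolic ones), and a foliation-adapted Moser argument for $\omega$ --- this last step being essentially Theorem \ref{thm:nbhdnondegsing}.

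Two caveats. First, your intermediate claim that one can remove the entire degree-$m$ part of $f_i-q_i$ at each step is too strong: the adjoint action of $\mfh=\langle q_1,\dots,q_n\rangle$ on homogeneous polynomials has a nontrivial kernel (for instance, powers of $x_iy_i$ are resonant in the hyperbolic case), so the cohomological equation only lets you normalize each $f_i$ to a \emph{function of} $q_1,\dots,q_n$ modulo flat terms. This is precisely why the theorem linearizes the foliation rather than the functions --- a distinction the paper's own remark after the statement insists on when $k_h\neq 0$. Your final conclusion is phrased correctly at the level of foliations, but the step as written would not go through. Second, the existence of the system-preserving $S^1$-actions for the genuine nonlinear system (not merely for its linear model) is itself a substantive theorem and cannot be dispatched by ``periodic to leading order plus Bochner''; as you correctly anticipate, this construction and the hyperbolic flat-remainder analysis are where the real work of Eliasson and Miranda--Zung lies, and a complete proof would have to supply them.
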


\begin{rem}
The theorem states the existence of a semilocal symplectomorphism between foliations with a non degenerate singularity of rank $k$ and the same parameters $(k_e,k_h,k_f)$. One could think that functions are also preserved via a symplectomorphism, but it is not possible to guarantee this statement when $h_k \neq 0$ as one can add up analytically flat terms on different connected components (see counterexample in \cite{MirandaThesis}).  In general one needs more information about the topology of the leaf to conclude (see Figure \ref{fig:twistedhyperbolic}).
\end{rem}

\begin{rem}
Because of Theorem \ref{thm:locallinearization}, if one considers the Taylor expansions of $F=(f_1, \dots,f_n)$ at the non-degenerate singular point in a canonical coordinate system and removes all terms except for linear and quadratic, the functions obtained remain commuting and define a Liouville foliation that can be considered as the \textit{linearization} of the initial foliation $\mathcal F$ given by $f_1, \dots,f_n$, to which it is symplectomorphic.
\end{rem}

The description of non-degenerate singularities at the semilocal level is completed with the following two results.

\begin{thm}[Model in a covering]
The manifold can be represented, locally at a non-degenerate singularity of rank $k$ and Williamson type $(k_e,k_h,k_f)$, as the direct product
$$M^{\text{reg}} \times \overset{k}\cdots \times M^{\text{reg}} \times M^{\text{ell}} \times \overset{k_e}\cdots \times M^{\text{ell}} \times M^{\text{hyp}} \times \overset{k_h}\cdots \times M^{\text{hyp}} \times M^{\text{foc}} \times \overset{k_f}\cdots \times M^{\text{foc}}$$
Where:
\begin{itemize}
    \item $M^{\text{reg}}$ is a "regular block", given by $$f=x,$$
    \item $M^{\text{ell}}$ is an "elliptic block", representing the elliptic singularity given by $$f=x^2 + y^2,$$
    \item $M^{\text{hyp}}$ is an "hyperbolic block", representing the hyperbolic singularity given by $$f=xy,$$
    \item $M^{\text{foc}}$ is a "focus-focus block", representing the focus-focus singularity given by
    $$\begin{cases}f_1 = x_1 y_{2}- x_{2} y_1 \\
    f_{2} = x_1 y_1 + x_{2} y_{2}\end{cases}.$$
\end{itemize}
For the first three types of blocks the symplectic form is $\omega = dx \wedge dy$, while for the focus-focus block it is $\omega = dx_1 \wedge dy_1 + dx_2 \wedge dy_2$.
\label{thm:directproduct}
\end{thm}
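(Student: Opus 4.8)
The plan is to obtain this statement as an essentially formal consequence of the smooth local linearization theorem (Theorem~\ref{thm:locallinearization}): that theorem already reduces the Liouville foliation near a non-degenerate singular point to the foliation generated by the Williamson basis functions of Theorem~\ref{thm:WilliamsonCartanSubalgebra} together with the rank coordinate functions, and all that remains is to observe that this linear model splits as a direct product indexed by the blocks.

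First I would invoke Theorem~\ref{thm:locallinearization} to fix a local symplectomorphism $\phi$ from a neighbourhood of the singular point $p$ onto a neighbourhood $U_0$ of the origin in $(\bbR^{2n},\omega_0)$, with $\omega_0=\sum_{i=1}^n dx_i\wedge dy_i$, carrying the Liouville foliation to the model foliation $\mathcal F_0$ whose leaves are the common level sets of the functions $f_i=x_i$ for $1\le i\le k$, $f_i=x_i^2+y_i^2$ for the elliptic indices, $f_i=x_iy_i$ for the hyperbolic indices, and the focus-focus pairs $f_j=x_jy_{j+1}-x_{j+1}y_j$, $f_{j+1}=x_jy_j+x_{j+1}y_{j+1}$. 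Shrinking $U_0$, I may assume it is a product of coordinate balls. Using the constraint $k+k_e+k_h+2k_f=n$, the $2n$ coordinates group into $k$ pairs $(x_i,y_i)$, then $k_e$ such pairs, then $k_h$ such pairs, then $k_f$ quadruples, and I would write $\bbR^{2n}$ as the corresponding product of the symplectic factors $(\bbR^2,dx\wedge dy)$ and $(\bbR^4,dx_1\wedge dy_1+dx_2\wedge dy_2)$.

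Then the key observation is that $\omega_0$ is exactly the sum of the symplectic forms of these factors and that each model function depends only on the coordinates belonging to its own factor; hence every leaf of $\mathcal F_0$ is the product of its traces on the factors, and $(\bbR^{2n},\omega_0,\mathcal F_0)$ is literally the direct product, as a symplectic manifold carrying a singular foliation, of the regular, elliptic, hyperbolic and focus-focus blocks described in the statement. Pulling this decomposition back by $\phi^{-1}$ gives the desired product structure on a neighbourhood of $p$.

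The one point requiring care --- and the reason the result is phrased in terms of a covering --- is that Theorem~\ref{thm:locallinearization} only yields a symplectomorphism of \emph{foliations}, not of the ordered tuples of first integrals: when $k_h\neq 0$ one may modify the hyperbolic integrals by analytically flat terms supported on distinct local branches of the singular leaf, so the product decomposition must be understood at the level of the Liouville foliation, with each block being the foliation cut out by the listed functions rather than those functions themselves (as in the remark following Theorem~\ref{thm:locallinearization}); identifying the singular leaf with the standard cross $\{xy=0\}$, or a focus-focus leaf with its linear model, is what may force a passage to a covering. The only other thing to check, that the neighbourhood of $p$ can be chosen of product type, is arranged by the shrinking step above and is routine.
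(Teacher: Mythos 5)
The paper offers no proof of this theorem: it is recalled as a known result (Eliasson, Miranda--Zung, Zung) in the preliminaries, so there is no in-paper argument to compare yours against. Taken at face value --- ``locally at a non-degenerate singularity'' meaning in a neighbourhood of the singular \emph{point} --- your reduction to Theorem~\ref{thm:locallinearization} is sound: that theorem already produces a symplectomorphism onto the linear model, and since the model symplectic form and the Williamson basis functions decouple into disjoint groups of coordinates (using $k+k_e+k_h+2k_f=n$), the linear model is literally the direct product of the listed blocks. That part is an essentially formal observation and is correctly carried out.

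The weak point is your final paragraph, where you conflate two distinct subtleties. The flat-term ambiguity in the hyperbolic integrals (the remark after Theorem~\ref{thm:locallinearization}) concerns whether the \emph{functions}, as opposed to the foliation, can be normalized; it has nothing to do with passing to a covering, and for the purely local statement no covering is needed at all. The covering in the theorem's title enters only in the \emph{semilocal} setting --- a neighbourhood of the orbit (a $k$-torus) or of the whole singular leaf --- where the return map of the torus action can act on a hyperbolic block by the $\bbZ/2\bbZ$ swap of branches (the twisted hyperbolic case of Figure~\ref{fig:twistedhyperbolic}), so that the direct product structure exists only on a finite cover, i.e.\ the model is $V/\Gamma$ as in the Miranda--Zung theorem stated immediately afterwards. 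If the intended reading of the statement is semilocal (which the title and the surrounding discussion strongly suggest), then the pointwise linearization theorem you invoke is not sufficient: you would need the equivariant semilocal normal form, and the group $\Gamma$ is precisely what your argument does not produce. You should either state explicitly that you are proving only the local-at-a-point version, or replace the appeal to Theorem~\ref{thm:locallinearization} by the semilocal Miranda--Zung normal form and identify the covering as the quotient by $\Gamma$.
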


In the case of a smooth system (defined by a smooth moment map), a similar result was proved and described by Miranda and Zung in \cite{MirandaZung}. It summarizes some previously results proved independently and fixes the case where there are hyperbolic components ($k_h\neq 0$), because in this case the result is slightly different and it has to be taken the semidirect product in the decomposition. As opposite to the case where there are only elliptic and focus-focus singularities, in which the base of the fibration of the neighbourhood is an open disk, if there are hyperbolic components the topology of the fiber can become complicated. The reason is essentially that for the smooth case a level set of the form $\{x_iy_i=\epsilon\}$ is not connected but consists of two components.

\begin{thm}[Miranda-Zung]
Let $V = D^k \times \bbT^k \times D^{2(n-k)}$ with coordinates $(p_1,...,p_k)$ for $D^k$, $(q_1 (mod 1),...,q_k (mod 1))$ for $\bbT^k$, and $(x_1,y_1,...,x_{n-k},y_{n-k})$ for $D^{2(n-k)}$ be a symplectic manifold with the standard symplectic form $\sum dp_i
\wedge dq_i + \sum dx_j \wedge dy_j$. Let $F$ be the moment map corresponding to a singularity of rank $k$ with Williamson type $(k_e,k_h,k_f)$. There exists a finite group $\Gamma$,
a linear system on the symplectic manifold $V/\Gamma$
and a smooth Lagrangian-fibration-preserving symplectomorphism
$\phi$ from a neighborhood of $O$ into $V/\Gamma$, which sends $O$
to the torus $\{p_i=x_i=y_i = 0\}$. The smooth symplectomorphism
$\phi$ can be chosen so that via $\phi$, the system-preserving
action of a compact group $G$ near $O$ becomes a linear
system-preserving action of $G$ on $V/\Gamma$. If the moment map
$F$ is real analytic and the action of $G$ near $O$ is
analytic, then the symplectomorphism $\phi$ can also be chosen to
be real analytic. If the system depends smoothly (resp.,
analytically) on a local parameter (i.e. we have a local family of
systems), then $\phi$ can also be chosen to depend smoothly
(resp., analytically) on that parameter.
\end{thm}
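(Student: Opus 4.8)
The plan is to proceed in three stages: first linearize the singular Lagrangian foliation by a diffeomorphism, then correct the symplectic form by a Moser-type deformation argument, and finally incorporate the compact group $G$ equivariantly and pass to a finite cover $V/\Gamma$ to absorb the monodromy of the tube around the singular orbit. Throughout, the compactness of $G$ is what makes the averaging steps legitimate.

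\textbf{Step 1 (Linearizing the foliation).} I would first apply the partial action--angle theorem along the rank-$k$ directions of the singular orbit $O\cong\bbT^k$, reducing to a transverse model in which the singularity has rank $0$. There the Eliasson-type local linearization (Theorem \ref{thm:locallinearization} in the smooth case, the Vey/Rüssmann analytic linearization in the analytic case) provides a local diffeomorphism carrying the Liouville foliation to the linear model of Theorem \ref{thm:directproduct}. Gluing this transverse normal form with the angle coordinates $q_i$ yields a diffeomorphism $\phi_0$ from a neighborhood of $O$ onto (a quotient of) $V = D^k\times\bbT^k\times D^{2(n-k)}$ that conjugates the two Liouville foliations, though not yet the symplectic forms. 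Since a system-preserving compact group $G$ must preserve $O$ and hence act on the transverse disk, Bochner's linearization theorem (averaging over Haar measure) lets me assume $G$ acts linearly, and a further averaging lets me take $\phi_0$ to intertwine the two $G$-actions.

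\textbf{Step 2 (Correcting the symplectic form).} Set $\omega_1 = (\phi_0^{-1})^*\omega$ on the model; this is a $G$-invariant symplectic form inducing the same singular Lagrangian foliation as the standard form $\omega_0 = \sum dp_i\wedge dq_i + \sum dx_j\wedge dy_j$. I would check that $\omega_t = (1-t)\omega_0 + t\,\omega_1$ is symplectic for all $t\in[0,1]$ (both forms restrict nondegenerately and compatibly to the foliation near $O$), then write $\omega_1-\omega_0 = d\sigma$ where $\sigma$ is produced by a \emph{foliated, relative Poincaré lemma} adapted to the singular fibration and then averaged over both $G$ and the torus $\bbT^k$, so that $\sigma$ is invariant and vanishes to the appropriate order along $O$. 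Solving $\iota_{X_t}\omega_t = -\sigma$ and integrating the time-dependent vector field $X_t$ — which one arranges to be tangent to the leaves, hence complete in a smaller neighborhood of $O$ — gives a foliation-preserving, $G$-equivariant symplectomorphism $\psi$ with $\psi^*\omega_1 = \omega_0$. Then $\phi := \phi_0\circ\psi^{-1}$ is the desired symplectomorphism, and because $\psi$ preserves the foliation the system is still linear in the coordinates provided by $\phi$.

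\textbf{Step 3 (The finite cover and assembly), and the obstacle.} The tube around $O$ need not be a trivial bundle: the transverse blocks — most visibly the hyperbolic ones, whose level sets $\{x_jy_j=\varepsilon\}$ are disconnected — can be permuted by the monodromy around the cycles of $\bbT^k$, and the torus action can carry nontrivial finite isotropy; encoding this monodromy in a finite group $\Gamma$ acting linearly on $V$ and commuting with the linear $G$-action produces the model $V/\Gamma$, with Steps 1 and 2 performed $\Gamma$-equivariantly on the cover and then descended. The analytic statement follows by running the same scheme in the real-analytic category (analytic linearization of the foliation plus an analytic relative Poincaré lemma), and the parametric statement follows since Bochner averaging, the Poincaré lemma, and the Moser integration can each be carried out with smooth (resp.\ analytic) dependence on a parameter. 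I expect the crux to be Step 2: producing a primitive $\sigma$ of $\omega_1-\omega_0$ that is simultaneously $G$-invariant, $\bbT^k$-invariant, compatible with the singular foliation, and suitably flat along $O$ — i.e.\ the correct singular equivariant relative Poincaré lemma — precisely because the hyperbolic directions have non-compact, disconnected leaves and no transverse toric symmetry, which is exactly where the naive homotopy operator fails and where passing to the finite cover $\Gamma$ becomes indispensable.
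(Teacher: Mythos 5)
First, a caveat on the comparison itself: the paper does not prove this statement. It is quoted verbatim as a known result of \cite{MirandaZung}, so the only meaningful benchmark is the proof in that reference. Measured against it, your architecture is essentially the correct one: local linearization of the foliation at a point of the orbit (Theorem \ref{thm:locallinearization}), propagation along the orbit, a finite twisting group $\Gamma$ encoding the transverse return maps (the twisted hyperbolic phenomenon of Figure \ref{fig:twistedhyperbolic}), a Moser path corrected by a foliation-adapted primitive, and Bochner averaging for the compact group $G$. The order of the steps and the role of each ingredient match the actual proof.

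Nevertheless the proposal is a scaffold rather than a proof, because the two load-bearing steps are asserted rather than established. (i) You invoke a ``foliated, relative Poincar\'e lemma adapted to the singular fibration'' and yourself flag it as the crux; this singular equivariant Poincar\'e lemma is precisely the hard analytic content of \cite{MirandaThesis} and \cite{MirandaZung} --- it rests on division and decomposition properties of the quadratic generators of Theorem \ref{thm:WilliamsonCartanSubalgebra}, it is delicate in the hyperbolic case because of flat functions supported on the disconnected branches of $\{x_iy_i=\epsilon\}$, and the naive radial homotopy operator does not produce a primitive vanishing on the foliation. Deferring it leaves the theorem unproved. (ii) The finiteness of $\Gamma$ is not automatic: one must show that the return maps obtained by transporting the transverse normal form around the cycles of $O\cong\bbT^k$ land, modulo foliation-preserving Hamiltonian isotopies, in the finite part of the symmetry group of the linear model; this is exactly where the computation of that group as $\bbT^k\times\bbT^{k_e}\times(\bbR\times\bbZ/2\bbZ)^{k_h}\times(\bbR\times\bbT^1)^{k_f}$ (quoted in the paper right after the theorem) enters. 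Without it your Step 1 is mildly circular, since you glue ``the transverse normal form with the angle coordinates'' around the torus before knowing that the gluing data reduces to a finite group. A smaller point: one cannot literally average a diffeomorphism over $G$ to make $\phi_0$ intertwine the two actions; in \cite{MirandaZung} the equivariance is obtained by first putting the system in normal form, observing that $G$ then acts through the symmetry group of the linear model, and linearizing that action using its explicit structure together with the compactness of $G$.
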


In this case, the so-called \textit{twisted hyperbolic} component can arise (see Figure \ref{fig:twistedhyperbolic}), and the group of all linear moment maps preserving symplectomorphisms of the linear direct model of
Williamson type $(k_e, k_h, k_f)$ is isomorphic to
$$\bbT^k \times \bbT^{k_e} \times  (\bbR \times \bbZ/2\bbZ)^{k_h} \times (\bbR \times \bbT^1)^{k_f}.$$

\begin{figure}[ht]
\begin{tikzpicture}[scale=1]
\coordinate (A) at (4.5,2.3);
\coordinate (B) at (2.5,3.5);
\path[->,>=stealth',ultra thick,color=blue!60!green,dashed]
    (A) edge[bend right=90] (B);
\begin{axis}[domain=-4:4, samples=1000, restrict y to domain=-4:4, axis x line=center, axis y line=center, tick style={draw=none}, xticklabels={,,}, yticklabels={,,}]
\addplot [color=red,thick]  {0.2/x};
\addplot [color=red!70!black,thick]    {1/x};
\addplot [color=red!40!black,thick]  {2.5/x};
\addplot [color=blue,thick]  {-0.2/x};
\addplot [color=blue!60!black,thick]    {-1/x};
\addplot [color=blue!20!black,thick]  {-2.5/x};
\end{axis}
\node at (A)[circle,fill,inner sep=1.5pt]{};
\node at (B)[circle,fill,inner sep=1.5pt]{};
\coordinate (start) at (6.85,2.65);
\coordinate (end) at (3.2,5.7);
\coordinate (start2) at (0,3.05);
\coordinate (end2) at (3.65,0);
\draw[->,>=stealth',ultra thick,color=blue!60!green]
    (B) .. controls (2,3) and (3,2) .. (A);
\end{tikzpicture}
\caption{In the neighbourhood of an orbit of rank $1$ and Williamson type $(0,1,0)$, the return map corresponding to the flow of circle action can give rise to two different behaviours. After one turn, the point can return to itself or it can return to its "opposite" branch (twisted hyperbolic case), and this defines a $\bbZ/2\bbZ$ action. The twisted hyperbolic case is described in this picture.}
\label{fig:twistedhyperbolic}
\end{figure}
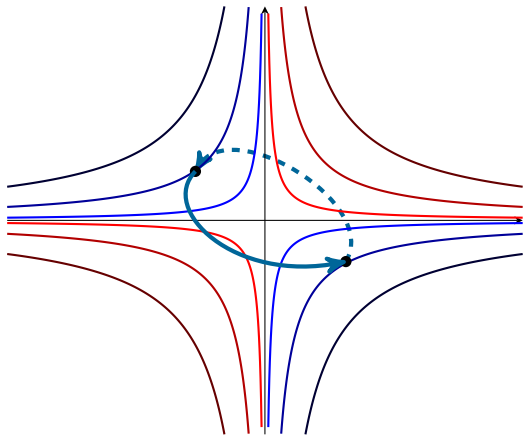

To end this section, we recall a related result which highlights the importance of considering the symplectomorphism at the level of the Lagrangian fibration induced by the Hamiltonian vector fields of the integrable system. Assume that $(M,\omega)$ is a symplectic manifold with a non-degenerate singularity of Williamson type $(k_e,k_h,k_f)$. Assume that the foliation $\mathcal{F}$ at the singularity is the linear foliation defined by $\mathcal F=\langle X_1,\dots, X_n\rangle$, where the vector fields $X_i$ are the linear Hamiltonian vector fields corresponding to the basis functions of Theorem \ref{thm:WilliamsonCartanSubalgebra}. Namely, $X_i$ are the vector fields induced by $\iota_{X_i}\omega=-df_i$, that is:
\begin{itemize}
    \item $X_i = -y_i\frbd{x_i} + x_i\frbd{y_i}$ \quad for elliptic components,
    \item $X_i = -x_i\frbd{x_i} + y_i\frbd{y_i}$ \quad for hyperbolic components,
    \item $X_i = -x_i\frbd{x_i} + y_i\frbd{y_i} -x_{i+1}\frbd{x_{i+1}} + y_{i+1}\frbd{y_{i+1}} \quad \text{and}\\
X_{i+1} = x_{i+1}\frbd{x_i} + y_{i+1}\frbd{y_i} -x_i\frbd{x_{i+1}} -y_i\frbd{y_{i+1}}$
\quad for focus-focus components.
\end{itemize}
Then, the following theorem holds.

\begin{thm}{\cite{MirandaThesis}}
Let $\omega$ be a symplectic form defined in a neighbourhood of the singularity at $p$ for which the foliation $\mathcal F$ is Lagrangian. Then, there exists a local diffeomorphism $\phi:(U,p)\longrightarrow (\phi(U),p)$ such that $\phi$ preserves the foliation and $\phi^*(\sum_i dx_i\wedge dy_i)=\omega$, where $x_i,y_i$ are local coordinates on $(\phi(U),p)$.
\label{thm:nbhdnondegsing}
\end{thm}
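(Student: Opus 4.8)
The plan is to reduce the problem to the Williamson block structure of $\mathcal F$ and to solve it separately on each block; on the non-trivial blocks this is a Moser-type deformation performed relative to the singular foliation. Writing $\omega_0=\sum_i dx_i\wedge dy_i$ for the standard form, the facts to exploit are that $\mathcal F$ is Lagrangian for both $\omega$ and $\omega_0$ --- so that the closed $2$-form $\sigma:=\omega-\omega_0$ restricts trivially to the leaves and, more usefully, is constrained to be compatible with the product decomposition of $\mathcal F$ --- and that the model foliation carries many symmetries (a torus action on the elliptic and focus--focus directions, a $\bbC^*$-action on each focus--focus block, and the hyperbolic flows), which serve both to normalize $\omega$ and to construct a \emph{foliate} deformation field, i.e.\ one whose flow sends leaves of $\mathcal F$ to leaves of $\mathcal F$.

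First I would normalize at $p$. Block by block, the group of linear transformations preserving $\mathcal F$ acts transitively on the set of $\mathcal F$-Lagrangian symplectic germs at $p$: conformal scalings on an elliptic or hyperbolic plane, the unbalanced complex scalings $(z_1,z_2)\mapsto(\gamma z_1,\delta z_2)$ on a focus--focus block (with $z_1=x_1+ix_2$, $z_2=y_1+iy_2$ and the foliation cut out by $z_1\bar z_2$), and suitable linear maps on the regular directions. Composing $\omega$ with such a foliation-preserving linear map fixing $p$, we may assume $\omega|_p=\omega_0|_p$; then the path $\omega_t=(1-t)\omega_0+t\omega$ is symplectic on a neighbourhood of $p$ for all $t$, each $\omega_t$ still making $\mathcal F$ Lagrangian. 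Next, testing the identity $\omega(X_i,X_j)\equiv 0$ at enough points together with $d\omega=0$ forces $\omega$ to split as a direct sum $\bigoplus_b\omega_b$ along the product decomposition of Theorem~\ref{thm:directproduct} --- when the singularity has positive rank one first peels off the $k$ regular, translation-type directions exactly as in the cotangent-lift model of Theorem~\ref{thm:cotangentliftALM} --- with each $\omega_b$ an $\mathcal F_b$-Lagrangian symplectic form on its block. It then suffices to build, on each block, a foliation-preserving diffeomorphism fixing the origin and pulling the standard block form back to $\omega_b$, and to take the direct sum.

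On a regular block $\bbR^2$ with leaves $\{x=\mathrm{const}\}$, the explicit map $(x,y)\mapsto\bigl(x,\int_0^y g(x,s)\,ds\bigr)$, with $\omega_b=g\,dx\wedge dy$, works. On an elliptic block, whose leaves are the circles $\{x^2+y^2=\mathrm{const}\}$, I would write the map in polar coordinates as $(r,\theta)\mapsto(\rho(r),\Theta(r,\theta))$, prescribing $\rho(r)\rho'(r)$ by the $\theta$-average of the density of $\omega_b$ and $\Theta_\theta$ by the corresponding ratio; the map moves the circles but its radial component depends on $r$ alone, hence it is foliate --- equivalently one runs Moser along the averaged path with deformation field in $\langle\partial_r\rangle\oplus T\mathcal F$, the $\theta$-averaging replacing the compact-group averaging of the smooth symplectic setting. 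The hyperbolic and focus--focus blocks are where the real work lies. A hyperbolic block has non-compact leaves, the branches $\{xy=\mathrm{const}\}$, degenerating onto the coordinate cross $\{xy=0\}$, and there is no compact symmetry to average over; away from the cross, passing to the foliation-adapted coordinates $(xy,\log|x/y|)$ on each open quadrant reduces the block problem to the regular-block problem, solved explicitly. The substantial point --- and the main obstacle --- is to glue the four quadrant constructions into a single map that is smooth across the singular cross and fixes the origin; this is where the normal-form analysis behind \cite{Eliasson90,MirandaZung} enters, together with the fact that the normalization makes $\omega_b-\omega_{0,b}$ vanish at the origin. A focus--focus block is then handled by combining the circle-averaging for the rotational part of the $z_1\bar z_2$-foliation with the hyperbolic-type argument for its dilation part, using the $\bbC^*$-symmetry.

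Finally, the direct sum of the block maps, composed with the inverse of the normalizing linear map, is a local diffeomorphism $\phi:(U,p)\to(\phi(U),p)$ preserving $\mathcal F$ with $\phi^*\bigl(\sum_i dx_i\wedge dy_i\bigr)=\omega$; since each ingredient is either an explicit isotopy through foliation-preserving maps or the time-one map of a flow of foliate vector fields starting at the identity, $\phi$ lies in the arc-connected component of the identity. The hard part is the one flagged above: on the hyperbolic (hence also the focus--focus) blocks one cannot average, so the foliate deformation must be built by hand and made to glue smoothly across the singular cross through $p$, and it is exactly there that the leafwise triviality of $\omega-\omega_0$ and the classification of non-degenerate singularities are indispensable.
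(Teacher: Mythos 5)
First, a caveat: the paper does not actually prove Theorem \ref{thm:nbhdnondegsing}; it is quoted from \cite{MirandaThesis} (with the completely elliptic case attributed to \cite{Eliasson90}), so there is no in-paper proof to compare against. Judged on its own terms, your proposal has a genuine gap at its very first reduction. The claim that the Lagrangian condition together with $d\omega=0$ forces $\omega$ to split as a direct sum $\bigoplus_b\omega_b$ along the product decomposition of Theorem \ref{thm:directproduct} is false: the Lagrangian condition only kills the restriction of $\omega$ to $T\mathcal F\times T\mathcal F$ and says nothing about cross terms pairing transverse directions of \emph{different} blocks. Already in the regular rank-$2$ case in $\bbR^4$, with leaves $\{x_1=c_1,\ x_2=c_2\}$, the closed form $\omega=dx_1\wedge dy_1+dx_2\wedge dy_2+f(x_1,x_2)\,dx_1\wedge dx_2$ makes $\mathcal F$ Lagrangian but does not split; analogous cross terms occur between elliptic, hyperbolic and focus--focus blocks. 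So the block-by-block construction cannot get started. A second, acknowledged, gap is that on the hyperbolic and focus--focus blocks you only carry out the quadrant-wise construction and defer the smooth gluing across the separatrix cross --- which is precisely the hard analytic content --- to the references.

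The actual proof in \cite{MirandaThesis} (following Eliasson's scheme) avoids both problems by staying global: after the pointwise linear normalization (which you do correctly), one runs a single Moser path $\omega_t=(1-t)\omega_0+t\omega$ and proves, as the key technical lemma, that $\omega-\omega_0$ admits a primitive $\alpha$ with $\alpha|_{T\mathcal F}=0$; this uses a relative Poincar\'e lemma for the singular foliation, built from averaging over the $\bbT^{k_e+k_f}$-subaction and integration along the hyperbolic flows. Given such an $\alpha$, the Moser field $X_t$ defined by $\iota_{X_t}\omega_t=-\alpha$ satisfies $\omega_t(X_t,Y)=0$ for every $Y$ tangent to $\mathcal F$, hence lies in the $\omega_t$-orthogonal of $T\mathcal F$, which equals $T\mathcal F$ because the foliation is Lagrangian for every $\omega_t$; the flow therefore preserves the foliation automatically, and no splitting or quadrant gluing is ever needed. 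To salvage your approach you would first have to remove the off-block cross terms by a foliation-preserving diffeomorphism, which is essentially the same relative Poincar\'e lemma in disguise.
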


For completely elliptic singularities (of rank $0$ and Williamson type $(k_e,0,0)$) Theorem \ref{thm:nbhdnondegsing} was proved by Eliasson \cite{Eliasson90}.
When $h_e\neq 0$, the foliation given by the hyperbolic components is preserved but the components of the moment map are not necessarily preserved (for more details see \cite{MirandaThesis}).

\begin{rem}\label{rem:mather}
All the theorems above can be understood in the language of Mather \cite{mather} and Thom \cite{thom} as stability theorems for the integrable systems.

In \cite{evasan} we studied the infinitesimal stability of integrable systems. The theorem above can be seen,in the spirit of Mather, as an \emph{infinitesimal stability implies stability} theorem.
\end{rem}

\section{Equivalence of close lifted actions}
\label{sec:RigidityCotLift}
We state and prove some results on symplectic equivalence of lifted close actions of a compact group on a compact manifold. We start proving a proposition on the equivalence at the level of cotangent lift given equivalence at the base. It is clear that if two symplectic actions are close, so are their fundamental vector fields. In Proposition \ref{prop:equivactionsandmomentmaps} we prove that if two actions are $C^1$-equivalent, so are their cotangent lifts, and we define explicitly the diffeomorphism that conjugates them. With the same idea, and since any cotangent lifted action is Hamiltonian, we prove that if two actions are $C^1$-equivalent, then the moment maps induced by their cotangent lifts are also equivalent.

\begin{prop}
Let $G$ be a Lie group and let $M$ be a smooth manifold. Let $\rho_1,\rho_2: G \times M \longrightarrow M$ be two actions which are $C^1$-equivalent via a conjugation through a diffeomorphism $\varphi$. Let $\hat{\rho}_1,\hat{\rho}_2$ be the cotangent lifts of $\rho_1,\rho_2$, respectively. Then, $\hat{\rho}_1$ and $\hat{\rho}_2$ are $C^1$-equivalent via the conjugation through $\hat{\varphi}$. The moment maps induced by $\hat{\rho}_1,\hat{\rho}_2$, denoted respectively by $\mu_1,\mu_2:T^*M\longrightarrow \mfg^*$, are equivalent via the conjugation through $\hat{\varphi}$ as $\mu_2=\mu_1\circ \hat{\varphi}$.
\label{prop:equivactionsandmomentmaps}
\end{prop}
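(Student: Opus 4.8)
The plan is to deduce the whole statement from the functoriality of the cotangent lift, so that essentially nothing beyond the chain rule is needed. First I would record that $f\mapsto\hat f$ is compatible with composition: for diffeomorphisms $f,g$ of $M$ the chain rule $d(f\circ g)_q=df_{g(q)}\circ dg_q$ transposes to $(d(f\circ g)_q)^*=(dg_q)^*\circ(df_{g(q)})^*$, and inverting reverses the order back, $\bigl((d(f\circ g)_q)^*\bigr)^{-1}=\bigl((df_{g(q)})^*\bigr)^{-1}\circ\bigl((dg_q)^*\bigr)^{-1}$; feeding this into $\hat f(q,p)=(f(q),((df_q)^*)^{-1}p)$ gives $\widehat{f\circ g}=\hat f\circ\hat g$, while $\widehat{\mathrm{id}_M}=\mathrm{id}_{T^*M}$ is immediate. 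Hence each $\hat f$ is a diffeomorphism of $T^*M$ with $(\hat f)^{-1}=\widehat{f^{-1}}$, and by Lemma~\ref{lem:cotangentliftpres1form} (whose proof applies to any diffeomorphism of $M$) it preserves $\lambda$, hence $\omega$; note also that $\hat f$ involves only the first derivative of $f$, so it is one degree of differentiability less regular than $f$.

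Next I would apply this to the conjugation hypothesis. Writing it as $(\rho_1)_g\circ\varphi=\varphi\circ(\rho_2)_g$ for all $g\in G$ --- this being the orientation under which the asserted identity $\mu_2=\mu_1\circ\hat\varphi$ holds, the opposite one merely exchanging the two indices --- functoriality gives $(\hat\rho_1)_g\circ\hat\varphi=\hat\varphi\circ(\hat\rho_2)_g$, i.e.\ $(\hat\rho_1)_g=\hat\varphi\circ(\hat\rho_2)_g\circ\hat\varphi^{-1}$ for every $g$. Since $g\mapsto(\hat\rho_i)_g$ is smooth, this is exactly the statement that $\hat\varphi$ conjugates the actions $\hat\rho_1$ and $\hat\rho_2$, so the two lifted actions are equivalent via $\hat\varphi$.

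For the moment maps I would use the intrinsic formula for the canonical moment map of a cotangent-lifted action: if $X^\xi$ is the infinitesimal generator of $\rho$ associated with $\xi\in\mfg$, then $\mu$ is determined by $\langle\mu(\alpha_q),\xi\rangle=(\iota_{\widehat{X^\xi}}\lambda)(\alpha_q)=\langle\alpha_q,X^\xi(q)\rangle$; in particular $\mu$ vanishes on the zero section, which pins it down among the moment maps of $\hat\rho$. Differentiating $(\rho_1)_{\exp(t\xi)}(\varphi(q))=\varphi\bigl((\rho_2)_{\exp(t\xi)}(q)\bigr)$ at $t=0$ gives $X_1^\xi(\varphi(q))=d\varphi_q\bigl(X_2^\xi(q)\bigr)$, and then, using $\hat\varphi(q,p)=\bigl(\varphi(q),((d\varphi_q)^*)^{-1}p\bigr)$ and the defining property $\langle\beta,d\varphi_q v\rangle=\langle(d\varphi_q)^*\beta,v\rangle$ of the transpose,
\begin{align*}
\langle\mu_1(\hat\varphi(q,p)),\xi\rangle
&=\bigl\langle\bigl((d\varphi_q)^*\bigr)^{-1}p,\,X_1^\xi(\varphi(q))\bigr\rangle
=\bigl\langle\bigl((d\varphi_q)^*\bigr)^{-1}p,\,d\varphi_q\bigl(X_2^\xi(q)\bigr)\bigr\rangle\\
&=\bigl\langle(d\varphi_q)^*\bigl((d\varphi_q)^*\bigr)^{-1}p,\,X_2^\xi(q)\bigr\rangle
=\langle p,\,X_2^\xi(q)\rangle
=\langle\mu_2(q,p),\xi\rangle
\end{align*}
for every $\xi\in\mfg$, which is exactly $\mu_2=\mu_1\circ\hat\varphi$.

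I do not expect a genuine obstacle: the proposition is functoriality plus the chain rule. The only points deserving attention are the bookkeeping of transposes against their inverses in the two manipulations above, the (already noted) drop of one degree of differentiability in passing from $\varphi$ to $\hat\varphi$, and the observation that the moment maps in the statement must be understood as the canonical ones $\mu_i^\xi=\iota_{\widehat{X_i^\xi}}\lambda$ --- it is precisely that intrinsic description, rather than some choice of primitive differing by a constant in $\mfg^*$, that intertwines with $\hat\varphi$.
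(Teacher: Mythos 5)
Your proposal is correct and takes essentially the same route as the paper: the conjugation of the lifts is obtained exactly as in the paper's proof (chain rule, transpose, invert — which you package as functoriality of $f\mapsto\hat f$), and the moment-map identity is the same computation in only slightly different clothing. The one cosmetic difference is that you evaluate $\langle\mu_i,\xi\rangle$ directly through the base pairing $\langle p,X_i^\xi(q)\rangle$ and the transpose identity, whereas the paper routes the same calculation through the $\hat\varphi$-invariance of the Liouville form $\lambda$ and the $\hat\varphi$-relatedness of the lifted fundamental vector fields; both amount to the same verification.
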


\begin{proof}
Assume $\rho_1,\rho_2: G \times M \longrightarrow M$ are two $C^1$-equivalent Lie group actions. Let $\varphi$ be the $C^1$-diffeomorphism conjugating the two actions, i.e, let $\varphi$ be a diffeomorphism such that $\rho_1\circ\varphi=\varphi\circ\rho_2$. Differentiating both sides, the following equality is obtained: 
$$d\rho_{1,\varphi(q)}\circ d\varphi_q=d\varphi_{\rho_2(q)}\circ d\rho_{2,q}.$$

Transposing and inverting the latter equality on both sides, one arrives to the following relation:
$$((d\rho_{1,\varphi(q)})^*)^{-1}\circ ((d\varphi_q)^*)^{-1}(p)=
((d\varphi_{\rho_2(q)})^*)^{-1}\circ ((d\rho_{2,q})^*)^{-1}(p),$$ which shows that $((d\varphi)^*)^{-1}$ is exactly the conjugation between $((d\rho_{1,\varphi(q)})^*)^{-1}$ and $((d\rho_{2,q})^*)^{-1}$.

We define now $\hat{\varphi}(q,p):=(\varphi(q),((d\varphi_q)^*)^{-1}(p))$, which is a diffeomorphism and can be thought as the \textit{cotangent lift of $\varphi$}. Consider the cotangent lift of the actions $\rho_1$ and $\rho_2$, i.e. $\hat{\rho}_1$ and $\hat{\rho}_2$. By definition, $\hat{\rho}_i(q,p)=(\rho_i(q),((d{\rho}_{i,q})^*)^{-1}(p))$. Then, it is clear that $\hat{\rho}_1\circ\hat{\varphi}=\hat{\varphi}\circ\hat{\rho}_2$, and we conclude that the cotangent lifts of the actions are equivalent on the cotangent bundle via conjugation by $\hat{\varphi}$, which is precisely the cotangent lift of the diffeomorphism $\varphi$ that conjugates $\rho_1$ and $\rho_2$ on the base.

The cotangent lift of the action $\hat{\rho}_i$ is a Hamiltonian action with moment map $\mu_i:T^*M \longmapsto \mfg^*$ given by
$$\langle \mu_i(p),X \rangle := \langle \lambda_p ,X^\#|_{p} \rangle =\langle p,X^\#|_{\pi(p)} \rangle,$$
where  $p\in T^*M, X\in\mfg$, $X^\#$ is the fundamental vector field of $X$ generated by the $\hat{\rho}_i$ action and $\lambda$ is the Liouville 1-form on $T^*M$.

The diffeomorphism $\hat{\varphi}$ is defined by
$$\hat{\varphi}(q,p):=(\varphi(q),((d\varphi_q)^*)^{-1}(p))$$ and satisfies $\hat{\rho}_1\circ\hat{\varphi}=\hat{\varphi}\circ\hat{\rho}_2$. On the other hand, by Lemma \ref{lem:cotangentliftpres1form} the Liouville one-form is invariant under the lifted actions, i.e. $\hat \rho_i^* \lambda = \lambda$ for $i=1,2$ and it is also invariant under the diffeomorphism $\hat\varphi$ by Lemma \ref{lem:cotangentliftpres1form}.

Through the following computation:
\begin{align*}
    \langle \mu_2(p),X \rangle &= \langle \lambda_p,X_2^\#|_p \rangle=\\
    &= \langle \lambda_p,\frac{d}{dt}\left(\hat\rho_2(\exp(-tX),p) \right)|_{t=0} \rangle=\\
    &= \langle \lambda_p,\frac{d}{dt}\left(\hat\varphi^{-1}(\hat\rho_1(\exp(-tX),\hat\varphi(p))) \right)|_{t=0}\rangle=\\
    &= \langle \lambda_{\hat\varphi(p)},\frac{d}{dt}\left(\hat\rho_1(\exp(-tX),\hat\varphi(p)) \right)|_{t=0} \rangle=\\
    &= \langle \lambda_{\hat\varphi(p)},X_1^\#|_{\hat\varphi(p)} \rangle =\\
    &= \langle \mu_1(\hat\varphi(p)),X \rangle = \langle \mu_1\circ \hat\varphi(p),X \rangle,
\end{align*}
where we have used that $\hat{\varphi}^{-1}\circ\hat{\rho}_1\circ\hat{\varphi}=\hat{\rho}_2$. Observe that the fundamental vector fields and the actions are $\hat\varphi$-related. If one of the fundamental vector fields is Hamiltonian in the $\xi$ direction (the one given by $\mu_1$), so is the second (the one given by $\mu_1\circ\hat\varphi$). We conclude that the moment maps are equivalent.

\end{proof}

Now we prove a theorem that can be thought as the cotangent lifted version of Theorem \ref{th:palaissymplectic}.

\begin{thm}
Let $G$ be a compact Lie group and $M$ a compact smooth manifold. Let $\rho_1,\rho_2: G \times M \longrightarrow M$ be two actions which are $C^1$-close. Let $\hat{\rho}_1,\hat{\rho}_2: G \times (T^*M,\omega) \longrightarrow (T^*M,\omega)$ be the cotangent lifts of $\rho_1,\rho_2$, respectively. Then, there exists a symplectomorphism that conjugates $\hat{\rho}_1$ and $\hat{\rho}_2$, thus making them equivalent.
\label{thm:palaiscotangentlift}
\end{thm}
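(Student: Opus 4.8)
The plan is to lift Palais's conjugating diffeomorphism from the base $M$ to the cotangent bundle $T^*M$ and then to observe that such a lift is \emph{automatically} a symplectomorphism, so that no Moser-type deformation argument (and hence no extra degree of differentiability, as in Theorem \ref{th:palaissymplectic}) is needed.

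First I would invoke Palais's Theorem \ref{th:palais}: since $G$ is compact, $M$ is compact, and $\rho_1,\rho_2$ are $C^1$-close actions of $G$ on $M$, there is a diffeomorphism $\varphi\colon M\to M$ conjugating them, i.e. $\rho_{1,g}\circ\varphi=\varphi\circ\rho_{2,g}$ for all $g\in G$; equivalently, $\rho_1$ and $\rho_2$ are $C^1$-equivalent in the sense of Proposition \ref{prop:equivactionsandmomentmaps}. Second, I would apply that proposition directly: the cotangent lift $\hat\varphi(q,p)=(\varphi(q),((d\varphi_q)^*)^{-1}(p))$ conjugates the cotangent lifts, $\hat\rho_1\circ\hat\varphi=\hat\varphi\circ\hat\rho_2$ (and, as a bonus, it intertwines the moment maps, $\mu_2=\mu_1\circ\hat\varphi$). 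Third — and this is the step that upgrades the conjugation to a symplectic one — I would note that $\hat\varphi$ is itself the cotangent lift of the base diffeomorphism $\varphi$, so by Lemma \ref{lem:cotangentliftpres1form} it preserves the Liouville $1$-form, $\hat\varphi^*\lambda=\lambda$. Hence $\hat\varphi^*\omega=\hat\varphi^*d\lambda=d(\hat\varphi^*\lambda)=d\lambda=\omega$, so $\hat\varphi$ is a symplectomorphism conjugating $\hat\rho_1$ and $\hat\rho_2$, and the theorem follows.

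The one delicate point — and the step I expect to be the real obstacle — is the regularity of $\hat\varphi$: Palais's conjugating map $\varphi$ is a priori only $C^1$, so the formula for $\hat\varphi$ involves $d\varphi$ and would merely be continuous, not enough to call $\hat\varphi$ a diffeomorphism or a symplectomorphism. The way around this is that Palais's construction is carried out here in the $C^\infty$ category (all data are $C^\infty$ by the standing assumption), and the conjugating diffeomorphism can be taken as regular as the actions; the $C^1$-closeness hypothesis is used only to guarantee that the naturally constructed map has invertible differential, not to bound its smoothness. So $\varphi$, and hence $\hat\varphi$, is smooth, and — since we never integrate a time-dependent vector field — the differentiability required in the closeness hypothesis drops from the $C^2$ of Theorem \ref{th:palaissymplectic} to $C^1$. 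Apart from this regularity bookkeeping (and being careful about the precise sense in which $\varphi$ conjugates the actions), the argument is the formal composition above.
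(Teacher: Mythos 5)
Your proposal is correct and follows essentially the same route as the paper: invoke Palais's theorem on the base, lift the conjugating diffeomorphism via Proposition \ref{prop:equivactionsandmomentmaps}, and conclude symplecticity from Lemma \ref{lem:cotangentliftpres1form} since the cotangent lift preserves the Liouville $1$-form. Your added remark on the regularity of $\varphi$ (and hence of $\hat\varphi$) is a reasonable point of care that the paper's proof passes over silently, but it does not change the argument.
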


\begin{rem}
Notice that the actions have to be $C^1$-close. Compared with the symplectic version of Palais rigidity Theorem (Theorem \ref{th:palaissymplectic}), where they have to be $C^2$-close, one degree of differentiability is gained here.
\end{rem}

\begin{proof}
Let $G$ be a compact Lie group and $M$ a compact smooth manifold. Let $\rho_1,\rho_2:G\times M\longrightarrow M$ be two actions and assume that they are $C^1$-close. By Theorem \ref{th:palais}, there exists a diffeomorphism $\varphi$ that conjugates $\rho_1$ and $\rho_2$.

Consider $\hat{\rho}_1,\hat{\rho}_2: G \times (T^*M,\omega) \longrightarrow (T^*M,\omega)$, the cotangent lifts of $\rho_1$ and $\rho_2$, respectively. By Proposition \ref{prop:equivactionsandmomentmaps}, the diffeomorphism $\hat{\varphi}$ conjugates $\hat{\rho}_1$ and $\hat{\rho}_2$. To prove that the actions $\hat{\rho}_1$ and $\hat{\rho}_2$ are not only equivalent, but symplectically equivalent, we need to check that $\hat{\varphi}$ preserves the symplectic form. By Lemma \ref{lem:cotangentliftpres1form}, it preserves the canonical $1$-form $\lambda$ of $T^*M$ and, hence, it preserves the symplectic form $\omega$.
\end{proof}

\section{Application to integrable systems with non-degenerate singularities}
\label{sec:ApplIntSys}

Results of the previous section, namely  shows a natural way of applying the result of rigidity of the lifted actions to the category of Hamiltonian systems. Theorem \ref{thm:palaiscotangentlift} guarantees, for instance, that the compact orbits of two $C^1$-close integrable systems on a symplectic manifold are equivalent at the level of the cotangent lift.

An immediate corollary of Palais rigidity Theorem is the following. Consider two integrable systems in a compact symplectic manifold $(M,\omega)$ given by $F=(f_1,\dots,f_n)$ and $\hat{F}=(\hat{f}_1,\dots,\hat{f}_n)$, respectively. Let $X_{1},\dots,X_{n}$ and $\hat X_{1},\dots,\hat X_{n}$ be the corresponding associated vector fields (those induced by $\iota_{X_i}\omega=-df_i$). If, for each $i=1,\dots,n$, the flow $\psi_i$ of $X_{i}$ is close to the flow $\hat\psi_i$ of $\hat X_{i}$, and all of them are actions of a compact group (case of toric manifolds), then the two integrable systems are equivalent, i.e., it exists a diffeomorphism $\varphi$ that conjugates $F$ and $\hat{F}$.
This equivalence can even be pictured in terms of the Delzant theorem looking at the corresponding Delzant polytopes \cite{delzant}.

In the same direction, a straightforward consequence of Theorem \ref{thm:palaiscotangentlift} at the semilocal level in a neighbourhood of a compact orbit is the following.

\begin{thm}
Let $F=(f_1,\dots,f_n):(M^{2n},\omega)\rightarrow\bbR^n$ and $\hat{F}=(\hat{f}_1,\dots,\hat{f}_n):({M}^{2n},\omega)\rightarrow\bbR^n$ be two smooth maps defining two integrable systems. Suppose that the singularities of $F$ and $\hat F$ are non-degenerate and a combination of only regular and elliptic components (with compact orbits) i.e., that each singularity of rank $k\neq n$ has Williamson type $(n-k,0,0)$. Assume that, for all $1\leq i \leq n$, $f_i$ and $\hat{f}_i$ are $C^2$-close. Then, for each $c \in \text{Im}(F)\subset \bbR^n$:
\begin{enumerate}
    \item there exists $\hat c \in \text{Im}(\hat F)\subset \bbR^n$ that is close to $c$, and
    \item there exists a symplectomorphism $\phi_c$ that makes the neighbourhoods of the leaves $\Lambda_c=F^{-1}(c)$ and $\hat\Lambda_{\hat c}=\hat F^{-1}(\hat c)$ equivalent. Namely, there exists $\phi_c$ defined in a neighbourhood of $\Lambda_c$ such that $\phi_c\circ F = \hat{F} \circ \phi_c$ and $\phi_c^*(\omega)=\omega$.
\end{enumerate}
\label{thm:PalaisHamSyst}
\end{thm}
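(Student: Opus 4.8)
The plan is to argue semilocally around the compact orbit $\Lambda_c=F^{-1}(c)$ and to bring both systems to one common linear model, using the equivariant normal form theorems to dispose of the elliptic directions and Theorem~\ref{thm:palaiscotangentlift} to dispose of the free-torus directions. Fix $c\in\Ima(F)$; since the singularity of $F$ along $\Lambda_c$ is non-degenerate of rank $k$ and Williamson type $(n-k,0,0)$, the orbit $\Lambda_c$ is a $k$-torus and, by Theorems~\ref{thm:locallinearization} and \ref{thm:directproduct} together with the Miranda--Zung model (no finite quotient or twisting occurs because $k_h=k_f=0$, so the base of the local fibration is a disk), a saturated neighbourhood $U$ of $\Lambda_c$ is symplectomorphic to a neighbourhood of the zero section in $T^*\bbT^k\times D^{2(n-k)}$ carrying the linear model. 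In particular $U$ carries an effective Hamiltonian $\bbT^n$-action $\rho$, generated by the $k$ periodic flows in the regular directions (which, by Theorem~\ref{thm:cotangentliftALM}, form the cotangent lift of translations on $\bbT^k$) together with the $n-k$ rotations of the elliptic blocks, and the moment map of $\rho$ realizes $F$ up to a fixed diffeomorphism of the image. Because $\hat F$ is $C^2$-close to $F$ and non-degenerate regular/elliptic singularities are structurally stable (non-degeneracy is an open condition and the elliptic type persists under small perturbations), there is a critical value $\hat c$ of $\hat F$ close to $c$ whose orbit $\hat\Lambda_{\hat c}$ has the same rank $k$ and the same Williamson type $(n-k,0,0)$; running the same construction for $\hat F$ gives a neighbourhood $\hat U$ of $\hat\Lambda_{\hat c}$, the \emph{same} model $T^*\bbT^k\times D^{2(n-k)}$, and a $\bbT^n$-action $\hat\rho$ realizing $\hat F$. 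This is conclusion~(1).

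Next I would transport $\hat\rho$ to $U$ through the two normal-form identifications and compare it with $\rho$ as $\bbT^n$-actions on a neighbourhood of the zero section of $T^*\bbT^k\times D^{2(n-k)}$. The hypothesis that $f_i$ and $\hat f_i$ are $C^2$-close passes, after the routine bookkeeping relating a function to its Hamiltonian flow and the normalised periodic generators to the $f_i$'s, to $C^1$-closeness of $\rho$ and $\hat\rho$ --- this is exactly the one-derivative drop already recorded in the remark after Theorem~\ref{thm:palaiscotangentlift}. Restricted to the $T^*\bbT^k$ factor, $\rho$ and $\hat\rho$ are cotangent lifts of $C^1$-close $\bbT^k$-actions on the \emph{compact} base $\bbT^k$, so Theorem~\ref{thm:palaiscotangentlift} produces a symplectomorphism of $T^*\bbT^k$ near the zero section conjugating them and preserving $\omega$; on the $D^{2(n-k)}$ factor the two actions are already the standard linear rotations and agree, so nothing is needed there. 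Concatenating this symplectomorphism with the two normal-form maps yields $\phi_c$ from a neighbourhood of $\Lambda_c$ onto a neighbourhood of $\hat\Lambda_{\hat c}$ with $\phi_c^*\omega=\omega$ and $\phi_c$ conjugating $\rho$ to $\hat\rho$; since $\rho$ determines $F$ and $\hat\rho$ determines $\hat F$ (each up to its fixed diffeomorphism of the image), $\phi_c$ carries the Liouville foliation of $F$ to that of $\hat F$, which is conclusion~(2).

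The step I expect to be the main obstacle is making the two reductions genuinely compatible: one must linearize $F$ near $\Lambda_c$ and $\hat F$ near $\hat\Lambda_{\hat c}$ \emph{equivariantly with respect to the respective torus actions and onto the identical target model}, so that the concatenation of the two normal-form maps with the map supplied by Theorem~\ref{thm:palaiscotangentlift} is a bona fide system-preserving symplectomorphism rather than three unrelated diffeomorphisms; this is precisely where the equivariant form of the Miranda--Zung theorem, and the absence of hyperbolic and focus-focus blocks (so that no twisted-hyperbolic or finite-quotient phenomena intervene), are essential. A second, more technical, point is to make the passage from $C^2$-closeness of $(f_1,\dots,f_n)$ and $(\hat f_1,\dots,\hat f_n)$ to $C^1$-closeness of the two $\bbT^n$-actions quantitatively clean --- controlling the normalising changes of coordinates uniformly on a fixed neighbourhood --- and to pin down the nearby critical value $\hat c$ so that the rank stratification of $\Ima(\hat F)$ matches that of $\Ima(F)$ near $c$; both follow from closeness together with openness of non-degeneracy, but they deserve care.
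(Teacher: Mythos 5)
Your argument is correct in outline, but it routes the two halves of the problem differently from the paper, and it is worth recording the difference. For the regular directions you do exactly what the paper does: identify a saturated neighbourhood of the orbit with a neighbourhood of the zero section of $T^*\bbT^k$, view the two systems as cotangent lifts of $C^1$-close torus actions on the compact base, and invoke Theorem~\ref{thm:palaiscotangentlift}. For the elliptic directions, however, the paper does \emph{not} pass to the exact linear model: it takes the torus actions generated by the joint flows of $F$ and of $\hat F$ near the completely elliptic point and applies the symplectic Palais theorem (Theorem~\ref{th:palaissymplectic}) to conjugate those two close compact actions directly. You instead invoke the Eliasson/Miranda--Zung normal form to put \emph{both} elliptic blocks into the identical standard model $\sum c_i(x_i^2+y_i^2)$ on $D^{2(n-k)}$, so that after normalization there is nothing left to compare. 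Both routes are legitimate (the paper's own remark after the theorem concedes that the normal-form route is available), but yours buys the elliptic case at the cost of the full strength of the elliptic linearization theorem, whereas the paper's version only needs the foliation-level identification plus rigidity of close compact symplectic actions --- which is the technique the paper is advertising. A side effect of your choice is a mild redundancy: once you have transported everything to the exact common model, the two $\bbT^n$-actions you compare are no longer merely close but equal on the elliptic factor (and, if you normalize the regular factor fully as well, equal there too), so Theorem~\ref{thm:palaiscotangentlift} is doing work only to the extent that you deliberately stop short of full normalization; you sense this tension yourself in your closing paragraph. Either resolution closes the argument, so this is not a gap, but you should decide which of the two you mean. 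Your treatment of conclusion~(1) --- persistence of the nearby critical value and of the Williamson type under $C^2$-perturbation --- matches the paper's.
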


\begin{rem}
Observe that for elliptic and regular components the connected components of the leaves equal the orbits.
\end{rem}
\begin{proof}
By closeness between $F$ and $\hat F$, for each $c \in \text{Im}(F)\subset \bbR^n$ there exists $\hat c \in \text{Im}(\hat F)\subset \bbR^n$ that is close to $c$ and such that $\hat c$ is a singular value of $\hat F$ if and only if $c$ is a singular value of $F$. Closeness between $F$ and $\hat F$ (together with non-degeneracy) guarantees that the number of elliptic components at the singularity $x\in F^{-1}(c)$ is the same as the number of elliptic components at $y\in \hat F^{-1}(\hat c)$.

Now, in view of Theorem \ref{thm:directproduct}, and since in this case the singularities are the product of only regular and elliptic type, if we prove the existence of the symplectomorphism for the case of a regular value and for the case of a complete elliptic singularity we will be finished.

If $c$ is a regular value of $F$, by the Arnold-Liouville-Mineur Theorem the neighbourhood of the leaf $\Lambda_c$ is diffeomorphic to the cotangent bundle of the Liouville torus. The same applies to the the neighbourhood of the leaf $\hat\Lambda_{\hat c}$. The action on $T^*\bbT^n$ is the cotangent lift of a compact torus action and then, by Theorem \ref{thm:palaiscotangentlift}, there exists a symplectomorphism $\phi_c$ conjugating $F$ and $\hat F$ on the respective leaf neighbourhoods.

Now suppose $c$ is a non-degenerate singular value of $F$ and $x\in F^{-1}(c)$ is a completely elliptic singularity. Consider the action given by the joint flow, which in this case is locally free and has a unique fixed point, the singularity $x$ . By means of the joint flow we identify the action as a torus action (see \cite{MirandaZung}) and we can apply Theorem \ref{th:palaissymplectic} to obtain rigidity between a neighbourhood of $\Lambda_c$ and $\hat\Lambda_{\hat c}$.
\end{proof}

\begin{rem}
In the case of a regular point, another way of proving symplectic rigidity is using the normal form of the moment map, since there is only one local model, which is the one given by the Arnold-Liouville-Mineur Theorem. 
\end{rem}

\begin{rem}
We do not require that the Williamson type of the non-degenerate singularities of $F$ and $\hat F$ is the same, only that they both are combination of regular and elliptic type (in both cases the orbits coincide with the leafs). Notice that if $\hat F$ is close enough to $F$, the elliptic components of a singularity of $F$ will remain elliptic in the associated singularity of $\hat F$, and the regular components can not become neither hyperbolic nor focus-focus, so compactness of actions and, hence, rigidity, is guaranteed without having to impose the same Williamson type.
\end{rem}

These consequences do not go beyond results that are already known concerning rigidity of integrable systems. In fact, they can be considered special cases of the Arnold-Lioville-Mineur Theorem, since it gives a unique normal form for neighbourhoods of regular points of integrable systems. Nevertheless, Theorem \ref{thm:palaiscotangentlift} can be used in the same context of integrable systems to prove a slightly more ambitious result.

\section{Application to $S^1$-invariant degenerate singularities}
\label{sec:ApplDegSing}
Consider the following example of a very simple integrable system.

\begin{examp}
Let $f=(x^2+y^2)^k$, with $k\geq2$, be the moment map of an the integrable system in $(\bbR^2,\omega_{st}=dx\wedge dy)$. It is a completely solvable system, it has an isolated degenerate singularity at the origin, the flows of the Hamiltonian vector field lie in concentric circles, and the singularity is a stable center. Since it is a degenerate singularity, we can not apply directly   normal form theorems. Nevertheless, we know that, the system is invariant with respect to the $S^1$ action and therefore we can use another system (which is non-degenerate) associated to the circle action for which  there exists a normal form, which in fact is $x^2+y^2$ and corresponds to an elliptic singularity.
\label{examp:r^2k}
\end{examp}

In order to  conclude we need a normal form result for circle actions. We first recall the general symplectic slice theorem and then apply it in the case of a fixed point of a circle action.
\begin{thm}[Guillemin-Sternberg \cite{guilleminsternbergnormalform}, Marle \cite{marle1}]\label{thm:symplecticslicetheorem}
Let $(M, \omega, G)$ be a symplectic manifold together with a Hamiltonian group action. Let $z$ be a point in $M$ such that $\mathcal{O}_z$ is contained in the zero level set of the momentum map. Denote $G_z$ the isotropy group and $\mathcal{O}_z$ the orbit of $z$. There is a $G$-equivariant symplectomorphism from a neighbourhood of the zero section of the bundle $T^*G \times_{G_z} V_z$ equipped with the above symplectic model to a neighbourhood of the orbit $\mathcal{O}_z$.
\end{thm}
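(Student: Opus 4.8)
The plan is to prove this by the standard Marle--Guillemin--Sternberg argument: first pin down the correct linear model at a point of the orbit, then produce an equivariant tubular neighbourhood realising that model to first order along the orbit, and finally upgrade it to a symplectomorphism with an equivariant Moser trick. Throughout I will assume $G$ is compact (or, more generally, that the action is proper and $\mathcal{O}_z$ is compact), so that averaging over $G$ yields invariant Riemannian metrics and the Moser step works on a neighbourhood of the compact orbit. Write $\mu$ for the momentum map and fix a base point $z$ with $\mathcal{O}_z\subset\mu^{-1}(0)$.

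First I would analyse the tangent space $T_zM$. Because $\mu$ is equivariant and the orbit lies in $\mu^{-1}(0)$, the orbit direction satisfies $T_z\mathcal{O}_z\subset\ker d\mu_z=(T_z\mathcal{O}_z)^{\omega}$, so $T_z\mathcal{O}_z$ is isotropic and the \emph{symplectic slice} $V_z:=(T_z\mathcal{O}_z)^{\omega}/T_z\mathcal{O}_z$ is a symplectic vector space on which $G_z$ acts linearly and symplectically. Choosing a $G_z$-invariant splitting $\mathfrak{g}=\mathfrak{g}_z\oplus\mathfrak{m}$ and a $G_z$-invariant isotropic complement to $(T_z\mathcal{O}_z)^{\omega}$ in $T_zM$, I get a $G_z$-equivariant symplectic identification $T_zM\cong\mathfrak{m}\oplus\mathfrak{m}^*\oplus V_z$; transporting this around the orbit identifies the normal bundle $TM|_{\mathcal{O}_z}/T\mathcal{O}_z$ $G$-equivariantly with $G\times_{G_z}(\mathfrak{m}^*\oplus V_z)$, which is precisely (a neighbourhood of the zero section of) the model bundle $Y=T^*G\times_{G_z}V_z$ carrying the reduced symplectic form $\omega_Y$ — the reduction at the zero level of the diagonal $G_z$-action on $(T^*G\times V_z,\ \omega_{\mathrm{can}}\oplus\omega_{V_z})$ — together with the residual left $G$-action.

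Next I would build the tubular neighbourhood. Fixing a $G$-invariant metric on $M$ and restricting its exponential map to a $G$-invariant complement of $T\mathcal{O}_z$ in $TM|_{\mathcal{O}_z}$ gives a $G$-equivariant diffeomorphism $\Phi$ from a neighbourhood of the zero section $Z:=G/G_z$ of $Y$ onto a neighbourhood of $\mathcal{O}_z$ in $M$; by the identification of the previous step (made symplectic at $z$ and then spread over the orbit by $G$-equivariance), the linearisation of $\Phi$ along $Z$ carries $\omega_Y$ to $\omega$. Hence $\omega_0:=\omega_Y$ and $\omega_1:=\Phi^*\omega$ are two $G$-invariant symplectic forms on a neighbourhood of $Z$ agreeing on $Z$. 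Then the equivariant relative Poincaré lemma (obtained by $G$-averaging the usual one) provides a $G$-invariant $1$-form $\beta$ vanishing to first order along $Z$ with $\omega_1-\omega_0=d\beta$; shrinking to a neighbourhood of the compact orbit, $\omega_t:=\omega_0+t(\omega_1-\omega_0)$ stays symplectic for $t\in[0,1]$, the equation $\iota_{X_t}\omega_t=-\beta$ defines a $G$-invariant time-dependent vector field $X_t$ vanishing on $Z$, and its flow $\psi_t$ is $G$-equivariant, fixes $Z$, and satisfies $\psi_1^*\omega_1=\omega_0$. The composite $\Phi\circ\psi_1$ is then the desired $G$-equivariant symplectomorphism from a neighbourhood of the zero section of $Y$ onto a neighbourhood of $\mathcal{O}_z$.

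The hard part will be the equivariant linear algebra of the first step: constructing the $G_z$-invariant isotropic complement and checking that, \emph{over the whole orbit} and not merely at $z$, the resulting identification matches the reduced form $\omega_Y$ of $T^*G\times_{G_z}V_z$ — the hypothesis $\mathcal{O}_z\subset\mu^{-1}(0)$ is exactly what makes this bundle (rather than a model shifted by a nonzero coadjoint orbit) the right one. A secondary technical point is the compactness used in the Moser step, needed both for $\omega_t$ to remain nondegenerate on a uniform neighbourhood of $Z$ and for $X_t$ to be integrable there.
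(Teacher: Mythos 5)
The paper does not prove this statement: it is quoted as a classical result with attribution to Guillemin--Sternberg and Marle, so there is no in-paper proof to compare against. Your outline is the standard proof from those references (isotropy of the orbit from $\mathcal{O}_z\subset\mu^{-1}(0)$, the symplectic slice $V_z$, the identification of the normal bundle with $G\times_{G_z}(\mathfrak{m}^*\oplus V_z)$, an invariant tubular neighbourhood, and the equivariant relative Moser--Weinstein argument) and is correct as stated; you even supply the definition of the ``symplectic model'' on $T^*G\times_{G_z}V_z$ as the reduction of $T^*G\times V_z$ at the zero level of the $G_z$-action, which the paper's statement leaves implicit.
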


Recall from Bochner's linearization theorem that in a neighbourhood of a fixed point of an action we can always linearize the group action.

Applying the theorem \ref{thm:symplecticslicetheorem} above to the circle action case with a fixed point and applying Bochner's theorem we obtain the classical Marle-Guillemin-Sternberg which gives a local normal form for the moment map of circle actions in a neighbourhood of a fixed point of the action.

\begin{thm}[\cite{marle1,marle2, guilleminsternbergnormalform}]\label{thm:mgs}
Let $(M^{2n},\omega)$ be a symmplectic manifold endowed with an $S^1$-Hamiltonian action and let $p$ be a fixed point for this action. Then there exist local coordinates $(x_1,y_1,\dots, x_n,y_n)$ such that $\omega=\sum_{i=1}^n dx_i\wedge dy_i$
and $\mu(x)=\sum_{i=1}^n c_i(x_i^2+y_i^2)$.

\end{thm}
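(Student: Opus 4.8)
The plan is to reduce the statement to a \emph{linear} model near $p$ by means of the symplectic slice theorem, and then to read off both conclusions from the representation theory of $S^1$. First I would normalize: since $p$ is a fixed point of the action its orbit $\mathcal{O}_p=\{p\}$ is a single point, and as a moment map is determined only up to an additive constant we may replace $\mu$ by $\mu-\mu(p)$ so that $\mathcal{O}_p\subset\mu^{-1}(0)$. Now apply Theorem \ref{thm:symplecticslicetheorem} at $z=p$: here the isotropy group equals all of $G=S^1$ and the orbit is trivial, so the theorem specializes to the equivariant Darboux theorem and produces an $S^1$-equivariant symplectomorphism from a neighbourhood of $p$ onto a neighbourhood of $0$ in $(T_pM,\omega_p)$ intertwining the given action with its linearization at $p$ (this is the symplectic refinement of Bochner's linearization theorem recalled above). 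After this reduction we may assume $M=\bbR^{2n}$, $\omega=\omega_p$ is constant, and the $S^1$-action is linear and symplectic with fixed point $p=0$.

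Once in this linear model I would invoke the standard decomposition of a symplectic representation of the compact group $S^1$: the space $(\bbR^{2n},\omega)$ splits as an $\omega$-orthogonal direct sum of $n$ invariant planes, on each of which $S^1$ acts by rotation at some integer speed $k_i$ (with $k_i=0$ allowed for directions fixed by the action). Choosing a symplectic basis adapted to this decomposition yields linear coordinates $(x_1,y_1,\dots,x_n,y_n)$ with $\omega=\sum_{i=1}^n dx_i\wedge dy_i$ — this is already the first assertion — and in these coordinates the infinitesimal generator of the action is the linear vector field $X=\sum_{i=1}^n k_i(-y_i\,\partial_{x_i}+x_i\,\partial_{y_i})$.

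To obtain the moment map I would use $\iota_X\omega=-d\mu$. A direct computation with the expressions above gives
\[
\iota_X\omega=-\sum_{i=1}^n k_i\,(x_i\,dx_i+y_i\,dy_i)=-d\!\left(\sum_{i=1}^n\tfrac{k_i}{2}(x_i^2+y_i^2)\right),
\]
so $\mu=\sum_{i=1}^n\tfrac{k_i}{2}(x_i^2+y_i^2)+\mathrm{const}$. The additive constant vanishes by the normalization $\mu(p)=0$ made at the start, and no linear terms can appear because $X$ vanishes at the fixed point, hence $d\mu_p=-\iota_{X_p}\omega_p=0$. Setting $c_i=k_i/2$ gives $\mu(x)=\sum_{i=1}^n c_i(x_i^2+y_i^2)$, completing the proof.

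The main obstacle, I expect, is the first reduction rather than anything afterwards: Bochner's theorem by itself only linearizes the action and need not carry $\omega$ to a constant form, so one genuinely needs the equivariant/symplectic version of it — the fixed-point specialization of Theorem \ref{thm:symplecticslicetheorem} — in order to straighten the symplectic form and linearize the action simultaneously. Everything after that is the well-known normal form for symplectic $S^1$-representations together with the one-line computation of the quadratic Hamiltonian generating a sum of planar rotations.
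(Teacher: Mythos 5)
Your proposal is correct and follows essentially the same route the paper sketches: specialize the symplectic slice theorem (Theorem \ref{thm:symplecticslicetheorem}) to a fixed point to obtain an equivariant linearization, then read off the normal form from the linear model. You in fact supply more detail than the paper (the weight-space decomposition of the symplectic $S^1$-representation and the computation of $\mu$ from $\iota_X\omega=-d\mu$), and your remark that Bochner's theorem alone does not straighten $\omega$ is a correct and worthwhile precision.
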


\begin{rem}
The constants $c_j$ can be interpreted as \emph{weights} of the circle action.
\end{rem}

The last conclusion of the example is summarized the following Lemma, which is an easy consequence of the Guillemin-Marle-Sternberg Theorem.

\begin{lem}
Consider a 2-dimensional integrable system which has an $S^1$-invariant degenerate singularity. Then, locally it is function of the quadratic normal form of elliptic type.
\label{lem:degsingS1invariant}
\end{lem}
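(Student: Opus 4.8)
The plan is to reduce the two-dimensional statement directly to the Marle-Guillemin-Sternberg normal form (Theorem \ref{thm:mgs}) applied to the ambient $S^1$-action, and then to observe that in dimension two the normal form forces the elliptic quadratic model. First I would set up the data: we have $(M^2,\omega)$ with an integrable system given by a single function $f$ (since $n=1$), and we assume $f$ has a degenerate singularity at a point $p$ which is $S^1$-invariant, meaning the Hamiltonian $S^1$-action generated by some function $\mu$ leaves $f$ invariant and fixes $p$. The key point is that the degenerate singularity of $f$ is, by hypothesis, a fixed point of this Hamiltonian circle action, so Theorem \ref{thm:mgs} applies to $\mu$: there exist local symplectic coordinates $(x,y)$ with $\omega = dx\wedge dy$ and $\mu(x,y) = c(x^2+y^2)$ for some constant $c$, which must be nonzero since $p$ is an isolated fixed point (equivalently, a nondegenerate singularity of $\mu$ — this is where the nondegeneracy of the elliptic model enters, via Bochner linearization giving a nontrivial rotation).

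The second step is to transfer this information to $f$. Since $f$ is $S^1$-invariant, $f$ is constant along the orbits of the circle action, i.e. constant along the level sets of $\mu = c(x^2+y^2)$ near $p$ (these level sets are the circles $x^2+y^2 = \text{const}$). A function on a neighbourhood of the origin in $\bbR^2$ that is constant on each such circle is a smooth function of $x^2+y^2$; one invokes here the standard fact (e.g. via Malgrange/Whitney division, or Schwarz's theorem on invariants of the circle action) that $f = g(x^2+y^2)$ for some smooth $g$ of one variable. Then $f$ is literally a function of the elliptic quadratic normal form $q = x^2 + y^2$ in these coordinates, which is exactly the claim.

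I would then note that the coordinates $(x,y)$ produced are symplectic for $\omega$, so no further adjustment of the symplectic form is needed; the whole statement is a clean corollary. The main obstacle — really the only subtlety — is justifying that $S^1$-invariance of $f$ plus the circular structure of the $\mu$-level sets implies $f = g(q)$ smoothly: this requires knowing the rotation action is genuinely the standard rotation in the MGS coordinates (which it is, since after Bochner linearization the weight is an integer and we may rescale), and then applying the invariant-theory fact that smooth $S^1$-invariant germs in $\bbR^2$ are smooth functions of $x^2+y^2$. Everything else is bookkeeping: identifying the single first integral, checking that the $S^1$-fixed point hypothesis is precisely the hypothesis of Theorem \ref{thm:mgs}, and observing that $c\neq 0$.
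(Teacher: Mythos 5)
Your proposal is correct and takes essentially the same route as the paper: apply the Marle--Guillemin--Sternberg normal form (Theorem \ref{thm:mgs}) to the circle action at the fixed point to obtain symplectic coordinates with $\mu=c(x^2+y^2)$, and then use $S^1$-invariance of $f$ to conclude $f=\phi(x^2+y^2)$. You are in fact more careful than the paper's two-line argument, notably in invoking Schwarz's theorem on smooth invariants to justify that an $S^1$-invariant smooth germ is a smooth function of $x^2+y^2$, a point the paper passes over by simply citing ``Noether's theorem.''
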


\begin{proof}
By Guillemin-Marle-Sternberg Theorem \ref{thm:mgs}, the moment map of an $S^1$-action with a fixed point is a sum of squares in its normal form. Since it is a 2-dimensional system and because of Noether's theorem, one can take coordinates $x,y$ in a neighbourhood of the singularity in such a way that the moment map can be written as $$f=\phi(x^2+y^2).$$
\end{proof}

Consider now $\bbR^4$ with coordinates ($x_1,y_1,x_2,y_2$) and with the standard symplectic form $\omega_{st}=dx_1\wedge dy_1+dx_2\wedge dy_2$. Consider the three following Hamiltonian functions:
\begin{align}
    F=&(f_1,f_2)=\left(x_1^2+y_1^2,x_2^2+y_2^2\right)\\
    G=&(g_1,g_2)=\left((x_1^2+y_1^2)^2,x_2^2+y_2^2\right)\\
    H=&(h_1,h_2)=\left((x_1^2+y_1^2)^2,(x_2^2+y_2^2)^2\right)
\end{align}

The three integrable systems have an isolated singularity at the origin, but only in the system given by $F$ it is non-degenerate. By the way, this system is the model of the two uncoupled harmonic oscillators and its level sets are 2-dimensional invariant tori. The same level sets appear in the other two systems, but Theorem \ref{thm:PalaisHamSyst} can be directly applied to state rigidity only in the first system, since it has a single non-degenerate singularity of elliptic type (it is already in normal form), while in the others the singularity is degenerate. Observing the system given by $G$, though, one can see that the second component produces an invariant $S^1$ action. This $S^1$ symmetry allows for a symplectic reduction of the system, making it decrease from a 4-dimensional to a 2-dimensional. In this new system, the singularity is still degenerate, but following the idea in Example \ref{examp:r^2k} one can find its non-degenerate elliptic normal form. Then, Theorem \ref{thm:PalaisHamSyst} can be applied to obtain rigidity of the reduced system, understanding rigidity as equivalence of close systems. It is not difficult to see that, then, the original system is also rigid.

In view of this procedure, we have the following result.

\begin{thm}
Consider an integrable system in a symplectic manifold $(M,\omega)$ given by $F=(f_1,\dots,f_n)$. Suppose that if $p\in M$ is a singularity of $F$, it is isolated, there are no other singularities in its $F$-level set, and it is:
\begin{itemize}
    \item either non-degenerate of regular or elliptic type, or
    \item degenerate of the following type: $f_1,\dots,f_{n-1}$ have a non-degenerate singularity of elliptic type at $p$, $f_n$ has a degenerate singularity at $p$ and $f_n$ is $S^1$-invariant.
\end{itemize}
Then the system is rigid at the neighbourhood of each compact leaf $\Lambda_c=F^{-1}(c)\subset M$.
\label{thm:rigiditysystemwithdegensing}
\end{thm}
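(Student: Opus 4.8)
The plan is to reduce the statement to a neighbourhood of each compact leaf $\Lambda_c=F^{-1}(c)$ and to split into cases according to which singularity lies on $\Lambda_c$. If $\Lambda_c$ contains no singularity, or only non-degenerate ones of regular and elliptic type, I would invoke Theorem~\ref{thm:PalaisHamSyst} directly, which already produces a symplectomorphism conjugating $F$ with any $C^2$-close system near such a leaf. Hence the only genuinely new case is the one in which $\Lambda_c$ passes through a degenerate singularity $p$ of the special type: $f_1,\dots,f_{n-1}$ non-degenerate elliptic at $p$ and $f_n$ degenerate and $S^1$-invariant. Since $p$ has rank $0$ and the first $n-1$ components are elliptic there, the leaf $\Lambda_c$ reduces to the point $\{p\}$, so ``a neighbourhood of $\Lambda_c$'' just means a neighbourhood of $p$.

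For that case I would follow the idea of Example~\ref{examp:r^2k} and trade $f_n$ for a non-degenerate companion. Let $\rho$ be the Hamiltonian circle action leaving $f_n$ invariant; by Bochner's theorem $\rho$ is linearizable near its fixed point $p$, and by the Marle-Guillemin-Sternberg normal form (Theorem~\ref{thm:mgs}), using the $S^1$-invariance of $f_n$ and Lemma~\ref{lem:degsingS1invariant}, I would pick Darboux coordinates $(x_1,y_1,\dots,x_n,y_n)$ in which $\omega=\sum_i dx_i\wedge dy_i$, the momentum map of $\rho$ is $\mu=x_n^2+y_n^2$, $f_n=\phi(\mu)$ with $\phi'(0)=0$ and $\phi'\neq 0$ on a punctured neighbourhood of $0$, and $f_1,\dots,f_{n-1}$ are $\rho$-invariant with elliptic quadratic parts $x_i^2+y_i^2$. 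Now introduce $\tilde{F}=(f_1,\dots,f_{n-1},\mu)$: it is an integrable system (the $f_i$ are $\rho$-invariant, so $\{f_i,\mu\}=0$), its singularity at $p$ is non-degenerate of Williamson type $(n,0,0)$, and --- the key point --- it generates the same singular Lagrangian foliation as $F$ near $p$, because $X_{f_n}=\phi'(\mu)\,X_\mu$ and $\phi'(\mu)$ vanishes only on $\{\mu=0\}=\{x_n=y_n=0\}$, which is exactly where $X_\mu$ itself vanishes, so the distributions $\langle X_{f_1},\dots,X_{f_n}\rangle$ and $\langle X_{f_1},\dots,X_{f_{n-1}},X_\mu\rangle$ coincide at every point. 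This is precisely the sense in which passing to the symplectic reduction by $\rho$ replaces the degenerate reduced Hamiltonian by its elliptic normal form.

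Applying Theorem~\ref{thm:PalaisHamSyst} to $\tilde{F}$ then shows that $\tilde{F}$ is rigid near $p$, and I would transfer this to $F$ as follows. Take a $C^2$-close system $\hat F$ of the same structural type, so that $\hat f_n$ is invariant under a circle action $\hat\rho$ close to $\rho$; by the symplectic version of Palais' theorem (Theorem~\ref{th:palaissymplectic}, applied near $p$ where all the circle actions in play are linear), after a symplectomorphism we may assume $\hat\rho=\rho$, hence $\hat f_n=\hat\phi(\mu)$, and $\hat F$ has the same foliation near its leaf as the companion $\hat{\tilde{F}}=(\hat f_1,\dots,\hat f_{n-1},\mu)$, which is $C^2$-close to $\tilde{F}$. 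The symplectomorphism that Theorem~\ref{thm:PalaisHamSyst} provides for $\tilde{F}$ and $\hat{\tilde{F}}$ conjugates their Lagrangian foliations --- hence those of $F$ and $\hat F$ --- and, being $\mu$-equivariant (i.e.\ $\rho$-equivariant), it is compatible with the reduction, which yields the desired symplectomorphism conjugating $F$ and $\hat F$ near $\Lambda_c$.

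The step I expect to be the main obstacle is this last transfer. Replacing $F,\hat F$ by the companions $\tilde{F},\hat{\tilde{F}}$ only identifies their Lagrangian \emph{foliations}, not the functions $\phi$ and $\hat\phi$, so a priori one obtains an equivalence of Lagrangian fibrations rather than of the moment maps themselves; to upgrade it one must either work with the foliation-level notion of equivalence (as in Theorem~\ref{thm:nbhdnondegsing}) or post-compose with a diffeomorphism of the target $\bbR^n$ carrying $\phi(\mu)$ to $\hat\phi(\mu)$, which is delicate at $\mu=0$ because $\phi$ fails to be invertible there. Two smaller technical checks are that $C^2$-closeness of $\hat F$ to $F$ indeed forces $\hat\rho$ to be $C^1$-close to $\rho$, so that Theorem~\ref{th:palaissymplectic} applies on the nose, and that the joint non-degeneracy of $\tilde{F}$ at $p$ follows from the intended reading of the hypothesis on $f_1,\dots,f_{n-1}$.
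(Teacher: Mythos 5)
Your proposal is correct in outline and rests on the same two pillars as the paper's proof: Theorem \ref{thm:PalaisHamSyst} for the leaves carrying only regular/elliptic non-degenerate singularities, and the Marle--Guillemin--Sternberg normal form (Theorem \ref{thm:mgs} together with Lemma \ref{lem:degsingS1invariant}) to convert the degenerate component $f_n$ into a smooth function of the elliptic quadratic $\mu=x_n^2+y_n^2$. Where you diverge is in the mechanics of the degenerate case: the paper performs $(n-1)$ successive symplectic reductions by the circle actions generated by the elliptic components $f_1,\dots,f_{n-1}$, lands on a $2$-dimensional reduced system with moment map $\overline{f_n}$, normalizes that by Theorem \ref{thm:mgs}, applies Theorem \ref{thm:PalaisHamSyst} to the reduced system, and then ``reconstructs''; you instead stay in dimension $2n$ and replace $F$ by the companion system $\tilde F=(f_1,\dots,f_{n-1},\mu)$, observing that $X_{f_n}=\phi'(\mu)\,X_\mu$ so that $F$ and $\tilde F$ span the same singular Lagrangian distribution near $p$. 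Your version buys a cleaner argument --- it avoids having to justify that reduction and reconstruction behave well under perturbation, a point the paper leaves entirely implicit --- at the cost of checking that the companion $\hat{\tilde F}$ of the perturbed system is $C^2$-close to $\tilde F$ and that the perturbed circle action can be matched with $\rho$, which you correctly note requires Theorem \ref{th:palaissymplectic}. The obstacle you flag at the end is genuine, and it is present, unacknowledged, in the paper's own proof as well: conjugating $\tilde F$ with $\hat{\tilde F}$ identifies the Lagrangian foliations of $F$ and $\hat F$ but does not by itself match $\phi(\mu)$ with $\hat\phi(\mu)$; the paper's closing sentence (``$f_n=H(\overline{f_n})$ \dots thus rigidity also holds for $f_n$'') silently performs the same upgrade from foliation equivalence to moment-map equivalence that you isolate as the delicate step. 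In that respect your proposal is at least as complete as the published argument, and more candid about where the remaining work lies.
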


\begin{proof}
In all the regular leaves or in the leaves containing non-degenerate singularities, Theorem \ref{thm:PalaisHamSyst} already gives rigidity. At any singular leaf containing a degenerate singularity, there exist $(n-1)$ $S^1$-invariant actions  that commute so we can perform a series of $(n-1)$ symplectic reductions  successively to reduce the system to a 2-dimensional system, which has a degenerate singularity corresponding to the singularity of $f_n$. At this point, the moment map of the reduced integrable system still gives an $S^1$-invariant action which has a moment map $\overline{f_n}$ and because of Theorem \ref{thm:mgs} the function $\overline{f_n}$ can be put in the quadratic normal form corresponding to the elliptic singularity. Then, again by Theorem \ref{thm:PalaisHamSyst}, the system associated to $\overline{f_n}$ is rigid at the neighbourhood of the leaf. Because of by Lemma \ref{lem:degsingS1invariant}  the function $f_n$ is a smooth function of $f_n=H(\overline{f_n})$ of $\overline{f_n}$ and thus rigidity also holds for $f_n$  and by reconstruction from the initial
integrable system $(f_1,\dots, f_n)$ in a neighbourhood of a compact leaf.
\end{proof}

Theorem \ref{thm:rigiditysystemwithdegensing} states semiglobal rigidity in the very particular case of systems with degenerate singularities that are non-degenerate in $(n-1)$ components of the moment map and have an $S^1$-invariant action in the degenerate component.

\begin{rem} From a dynamical point of view, the results included in this paper can be understood as a weak KAM theorem where Hamiltonian perturbations occur in the subclass of integrable systems.
It would be interesting to explore the weak analogues for focus-focus singularities which can be seen as a cotangent lift as shown in example \ref{examp:focusfocuscotangentlift}.Those singularities are infinitesimally stable \cite{evasan} and stable (see for instance \cite{sanfocusfocus}, \cite{chaperonfocusfocus}) however it is not possible to follow the guidelines above due to the lack of compactness of the group $S^1\times \mathbb R$.
\end{rem}

\bibliographystyle{alpha}
\bibliography{RigidityMirMiranda}

\end{document}